\documentclass{amsart}

\usepackage{amsmath,amssymb,amsthm}
\usepackage{url,verbatim}
\usepackage{enumerate}

\usepackage[T1]{fontenc}
\usepackage[utf8]{inputenc}

\newtheorem{theorem}{Theorem}
\newtheorem{lemma}[theorem]{Lemma}
\newtheorem{corollary}[theorem]{Corollary}
\newtheorem{proposition}[theorem]{Proposition}

\newtheorem{Claim}[theorem]{Claim}
\newtheorem{fact}[theorem]{Fact}

\theoremstyle{definition}
\newtheorem{definition}[theorem]{Definition}

\newtheorem{remark}[theorem]{Remark}
\newtheorem{question}[theorem]{Question}
\newtheorem{problem}[theorem]{Problem}

\newcommand{\cM}{\mathcal{M}}
\def\Z{\mathbb Z}
\def\Th{\operatorname{Th}}
\def\NIP{\operatorname{NIP}}
\newcommand{\seq}{\subseteq}
\def\R{\mathbb R}
\newcommand{\Zbar}{\mathbb{Z}^*}
\def\C{\mathbb C}
\def\Q{\mathbb Q}
\newcommand{\cL}{\mathcal{L}}
\def\N{\mathbb N}
\newcommand{\nv}{\text{-}}
\newcommand{\inv}{^{\nv 1}}
\newcommand{\smd}{\raisebox{.75pt}{\textrm{\scriptsize{~\!$\triangle$\!~}}}}
\newcommand{\miff}{\makebox[.4in]{$\Leftrightarrow$}}
\newcommand{\abar}{\bar{a}}

\newcommand{\cbar}{\bar{c}}
\newcommand{\xbar}{\bar{x}}
\newcommand{\ybar}{\bar{y}}
\newcommand{\zbar}{\bar{z}}
\def\rk{\operatorname{rk}}
\newcommand{\uth}{^{\textrm{th}}}
\newcommand{\fbar}{\bar{f}}
\newcommand{\gbar}{\bar{g}}
\newcommand{\ubar}{\bar{u}}

\newcommand{\noit}[1]{\noindent\textit{#1}}
\newcommand{\claim}{\hfill$\dashv_{\text{\scriptsize{claim}}}$}
\newcommand{\jbar}{\bar{\jmath}}
\newcommand{\mand}{\makebox[.4in]{and}}
\newcommand{\mbar}{\bar{m}}
\def\dom{\operatorname{dom}}
\newcommand{\cP}{\mathcal{P}}
\newcommand{\cI}{\mathcal{I}}

\newbox\usefulbox

\makeatletter
\def\getslant #1{\strip@pt\fontdimen1 #1}

\def\skov #1{\mathchoice
 {{\setbox\usefulbox=\hbox{$\m@th\displaystyle #1$}%
    \dimen@ \getslant\the\textfont\symletters \ht\usefulbox
    \divide\dimen@ \tw@ 
    \kern\dimen@ 
    \overline{\kern-\dimen@ \box\usefulbox\kern\dimen@ }\kern-\dimen@ }}
 {{\setbox\usefulbox=\hbox{$\m@th\textstyle #1$}%
    \dimen@ \getslant\the\textfont\symletters \ht\usefulbox
    \divide\dimen@ \tw@ 
    \kern\dimen@ 
    \overline{\kern-\dimen@ \box\usefulbox\kern\dimen@ }\kern-\dimen@ }}
 {{\setbox\usefulbox=\hbox{$\m@th\scriptstyle #1$}%
    \dimen@ \getslant\the\scriptfont\symletters \ht\usefulbox
    \divide\dimen@ \tw@ 
    \kern\dimen@ 
    \overline{\kern-\dimen@ \box\usefulbox\kern\dimen@ }\kern-\dimen@ }}
 {{\setbox\usefulbox=\hbox{$\m@th\scriptscriptstyle #1$}%
    \dimen@ \getslant\the\scriptscriptfont\symletters \ht\usefulbox
    \divide\dimen@ \tw@ 
    \kern\dimen@ 
    \overline{\kern-\dimen@ \box\usefulbox\kern\dimen@ }\kern-\dimen@ }}%
 {}}
\makeatother

\title[Intermediate structures between $(\Z,+,0)$ and $(\Z,+,<,0)$]{There are no intermediate structures between the group of integers and Presburger arithmetic}

\author{Gabriel Conant}

\address{Department of Mathematics\\
University of Notre Dame\\
Notre Dame, IN, 46656, USA}

\date{July 18, 2018}

\begin{document}

\begin{abstract}
We show that if a first-order structure $\cM$, with universe $\Z$, is an expansion of $(\Z,+,0)$ and a reduct of $(\Z,+,<,0)$, then $\cM$ must be interdefinable with $(\Z,+,0)$ or $(\Z,+,<,0)$.
\end{abstract}

\maketitle

\section{Introduction}

\setcounter{theorem}{0}
\numberwithin{theorem}{section}

Suppose $\cM_1$ and $\cM_2$ are first-order structures, with the same underlying universe $M$, but possibly in different languages. We say $\cM_1$ is a \emph{reduct} of $\cM_2$ (equivalently, $\cM_2$ is an \emph{expansion} of $\cM_1$) if, for all $n>0$, every subset of $M^n$ definable in $\cM_1$ is definable in $\cM_2$, where definability always means \emph{with parameters from $M$}. We say $\cM_1$ and $\cM_2$ are \emph{interdefinable} if each structure is a reduct of the other. The following is our main result.
 
 \begin{theorem}\label{thm:premain}
 Suppose $\cM$ is an expansion of $(\Z,+,0)$ and a reduct of $(\Z,+,<,0)$. Then $\cM$ is interdefinable with either $(\Z,+,0)$ or $(\Z,+,<,0)$.
 \end{theorem}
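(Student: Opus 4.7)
The plan is contrapositive: assume $\cM$ is not interdefinable with $(\Z,+,0)$ and show that $\cM$ defines $<$; since $\cM$ is also a reduct of $(\Z,+,<,0)$, this will force interdefinability with $(\Z,+,<,0)$. Because $\cM$ expands $(\Z,+,0)$, defining $<$ is equivalent to defining the set $\{n \in \Z : n \geq 0\}$ (via $x < y \iff y-x \in \{n : n \geq 0\}\setminus\{0\}$). The idea is to locate an $\cM$-definable subset of $\Z$ whose tail behavior at $+\infty$ differs from that at $-\infty$, and then promote this asymmetry to a one-sided set using only the group operation.

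The first step is a reduction to a one-variable witness. By hypothesis some $\cM$-definable $X \subseteq \Z^n$ is not $(\Z,+,0)$-definable; choose $n$ minimal. The main technical obstacle is to show $n = 1$. If $n \geq 2$, I would apply the quantifier elimination for Presburger in the language $\{+,-,0,<,\{\equiv_m\}_{m\geq 1}\}$ to write the defining formula of $X$ as a boolean combination of linear (in)equalities and congruences, argue that at least one linear inequality must appear essentially (otherwise $X$ would already be $(\Z,+,0)$-definable), and extract either a coordinate fiber of $X$ or an integer-linear projection $L(X) \subseteq \Z$ that is $\cM$-definable but fails to be $(\Z,+,0)$-definable, contradicting the minimality of $n$. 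Ruling out the possibility that every such fiber and projection happens to be accidentally group-definable, by careful use of Presburger cell decomposition, is the delicate part of the argument.

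Once an $\cM$-definable $Y \subseteq \Z$ that is not $(\Z,+,0)$-definable is in hand, the classification of Presburger-definable subsets of $\Z$ supplies some $N \geq 1$ and sets $P_+, P_- \subseteq \Z$, each a finite union of cosets of a common subgroup $d\Z$, such that $Y$ agrees with $P_+$ on $\{n \geq N\}$ and with $P_-$ on $\{n \leq -N\}$. Non-$(\Z,+,0)$-definability of $Y$ forces $P_+ \neq P_-$, so $Q := P_+ \triangle P_-$ is a nonempty $(\Z,+,0)$-definable union of cosets of $d\Z$; we may choose $d$ so that $d\Z$ is exactly the stabilizer of $Q$ in $(\Z,+)$. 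Then $Y \triangle P_+$ is $\cM$-definable, finite on $\{n \geq -N\}$, and equal to $Q$ on $\{n \leq -N\}$; taking its symmetric difference with the parameter-definable finite correction produces the $\cM$-definable set $Y'' := Q \cap \{n \in \Z : n \leq -N\}$. Now
\[
T := \{z \in \Z : z + Y'' \subseteq Y''\}
\]
is $\cM$-definable, and one checks that $z + Y'' \subseteq Y''$ forces $z + Q = Q$ (hence $z \in d\Z$) together with $-N + z \leq -N$ (hence $z \leq 0$), and that these conditions are also sufficient, so $T = d\Z \cap \{z : z \leq 0\}$. Therefore $-T = d\Z \cap \{z : z \geq 0\}$ is $\cM$-definable, and finally
\[
\{n \in \Z : n \geq 0\} \;=\; \bigcup_{i=0}^{d-1} \bigl((-T) + i\bigr)
\]
is $\cM$-definable using the parameters $0, 1, \ldots, d-1$. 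This recovers $<$, completing the outline; the reduction to $n = 1$ is the principal remaining difficulty.
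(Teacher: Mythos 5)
There is a genuine gap, and it sits exactly where you flag it: the reduction to a one-variable witness. Your sketch for the case $n\geq 2$ proposes to extract ``a coordinate fiber of $X$ or an integer-linear projection $L(X)\subseteq\Z$'' that is not $(\Z,+,0)$-definable. As stated this fails: there are Presburger sets in $\Z^2$, such as $A=\{(x,y):x\leq y\leq 2x\}$, for which every fiber $A_x$ is a finite interval (hence $(\Z,+,0)$-definable with parameters), every section $\{x:(x,c)\in A\}$ is finite, and the coordinate projections are $\N$ only by accident of this example --- one can arrange variants (e.g.\ $\{(x,y):x\leq y\leq 2x\}\cup\{(x,y):2x\leq y\leq x\}$) whose projections are all of $\Z$. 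Detecting the hidden asymmetry then requires comparing the \emph{boundary functions} of the fibers across the whole base, not inspecting fibers or projections one at a time; in the example one must notice that the section along the line $y=x+1$ is a half-line. Proving that some such linear section (or some difference $f_s-g_t$ of boundary functions) must betray the ordering whenever the set is not group-definable is the actual content of the theorem, and it is not a routine consequence of quantifier elimination. The paper spends Sections 4--8 on precisely this: a quasi-coset decomposition of $(\Z,+,0)$-definable sets, a covering argument using Kadets' inradius theorem for polyhedra to show that some pair $f_s-g_t$ of fiber-endpoint functions must be constant on a full-rank quasi-coset of the base, and a secondary induction peeling off one pair of boundary functions at a time. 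None of this is replaced by anything in your outline, so the proposal does not constitute a proof.

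The remainder of your argument is sound. The framing (restating the theorem as: every Presburger $A\subseteq\Z^n$ is either $(\Z,+,0)$-definable or defines the ordering) matches the paper's reformulation, and your one-variable endgame is correct: isolating the eventual coset patterns $P_+,P_-$ at $\pm\infty$, observing $P_+\neq P_-$, and recovering $d\Z\cap\{z\leq 0\}$ as the stabilizer-type set $\{z:z+Y''\subseteq Y''\}$ is a valid (and slightly different) route to the paper's Corollary 3.3, which instead reads off $\N$ directly via $x\in\N\Leftrightarrow d_j+mx\in A$ for a residue $d_j$ occurring on only one side. But this base case is the easy half; the multidimensional reduction you defer is the theorem.
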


 The motivation for this result is the growing interest in ``tame'' expansions of the group $(\Z,+,0)$, which can be seen as a discrete analog to the prolific study of tame expansions of the real field. Since $(\Z,+,0)$ is a stable $U$-rank $1$ group, there are a wide variety of model theoretic properties one can use as a notion of tameness (see, e.g., \cite{ADHMS2}, \cite{ADHMS1}, \cite{CoPi}, \cite{DoGo}, \cite{KaSh}, \cite{PaSk}).  The following questions, which more specifically motivate Theorem \ref{thm:premain}, involve tameness arising from \emph{stability} and \emph{dp-minimality}. 

\begin{question}\label{ques:two}$~$
\begin{enumerate}[$(a)$]
\item (Marker 2011) Is there a proper expansion $\cM$ of $(\Z,+,0)$ such that $\cM$ is a reduct of $(\Z,+,<,0)$ and $\Th(\cM)$ is stable?
\item (Aschenbrenner, Dolich, Haskell, Macpherson, Starchenko \cite{ADHMS2} 2013) Is every dp-minimal expansion of $(\Z,+,0)$ a reduct of $(\Z,+,<,0)$?
\end{enumerate}
\end{question}

 From a more general perspective, these questions can be viewed as initial incursions into the following ambitious projects.

\begin{problem}\label{prob}$~$
\begin{enumerate}[$(a)$]
\item Classify the stable expansions of $(\Z,+,0)$.
\item Classify the dp-minimal (i.e. dp-rank $1$) expansions of $(\Z,+,0)$.
\end{enumerate}
\end{problem}

We refer the reader to \cite{PiGST} for background on stability, and \cite{DGL}, \cite{Sibook} for background on dp-rank in $\NIP$ theories. While these notions provide motivation for our work, the proof of Theorem \ref{thm:premain} does not require any familiarity with them.

The main reason that \emph{Presburger arithmetic} $(\Z,+,<,0)$ arises in both Questions \ref{ques:two}$(a)$ and \ref{ques:two}$(b)$ is that it is a canonical example of a ``tame'' proper expansion of $(\Z,+,0)$ (versus a ``wild'' expansion like the ring $(\Z,+,\cdot,0,1)$).  For instance, $(\Z,+,<,0)$ is \emph{quasi-o-minimal}, and thus dp-minimal (see \cite{BPW}, \cite{DGL}). Moreover, definable sets in $(\Z,+,<,0)$ enjoy a cell decomposition similar to those in o-minimal structures (this result is due to Cluckers \cite{Cluck}; we give a brief summary in Section \ref{sec:CD}).

At the time Marker posed Question \ref{ques:two}$(a)$, there were no known examples of proper stable expansions of $(\Z,+,0)$. Given the known tameness of $(\Z,+,<,0)$, and well-understood structure of Presburger definable sets, it became natural to ask if such sets could produce proper stable expansions of $(\Z,+,0)$. However, as $(\Z,+,<,0)$ is unstable, Theorem \ref{thm:premain} immediately implies a negative answer to Question \ref{ques:two}$(a)$. We should remark that proper stable expansions of $(\Z,+,0)$ are now known to exist (due to Palac\'{i}n and Sklinos \cite{PaSk} and Poizat \cite{PoZ}, independently). However, in recent joint work with Pillay \cite{CoPi}, we show there are no proper stable expansions of $(\Z,+,0)$ of finite dp-rank. This provides an alternative proof of the answer to Question \ref{ques:two}$(a)$ which, unlike this paper, relies on technology from stable group theory.

Motivation for Question \ref{ques:two}$(b)$ comes from several negative results on tameness in expansions of $(\Z,+,<,0)$. Specifically, while proper $\NIP$ expansions of $(\Z,+,<,0)$ exist (e.g. by \cite{PointPA}), there are many results showing that such expansions do not satisfy various refinements of $\NIP$. For example, Belegradek, Peterzil, and Wagner \cite{BPW} showed in 2000 that $(\Z,+,<,0)$ has no proper quasi-o-minimal expansions. This was generalized in 2013 to dp-minimal expansions by Aschenbrenner, Dolich, Haskell, Macpherson, and Starchenko \cite{ADHMS1}, and finally in 2016 to \emph{strong} (which includes finite dp-rank) expansions by Dolich and Goodrick \cite{DoGo}. Each of these results relies on a powerful fact, due to Michaux and Villemaire \cite{MiVi}, that any \emph{proper} expansion of $(\Z,+,<,0)$ defines a \emph{unary} set $A\seq\Z$, which is not definable in $(\Z,+,<,0)$. Until very recently $(\Z,+,<,0)$ was the only documented proper dp-minimal expansion of $(\Z,+,0)$ and so, toward Problem \ref{prob}$(b)$ and given the previously quoted results, it is natural to consider expansions of $(\Z,+,0)$ obtained as reducts of $(\Z,+,<,0)$. By Theorem \ref{thm:premain} however, there are no such structures other than $(\Z,+,0)$ and $(\Z,+,<,0)$, and so a positive answer to Question \ref{ques:two}$(b)$ would require a genuinely different dp-minimal expansion of $(\Z,+,0)$. In fact, such expansions have recently been discovered by Alouf and D'Elb\'{e}e \cite{AlDel}.

The following corollary summarizes the progress Theorem \ref{thm:premain} makes with respect to Question \ref{ques:two}$(a)$ (answered in the negative), Question \ref{ques:two}$(b)$, and the more general projects in Problem \ref{prob}.

\begin{corollary}\label{cor:premain}$~$
\begin{enumerate}[$(a)$]
\item Fix $n>0$ and suppose $A\seq\Z^n$ is definable in Presburger arithmetic. Then either $A$ is definable in $(\Z,+,0)$ or, together with the group structure, $A$ defines the ordering on $\Z$ and thus defines any Presburger set.
\item Suppose $\cM$ is a stable expansion of $(\Z,+,0)$. If $A\seq\Z^n$ is definable in both $\cM$ and $(\Z,+,<,0)$, then $A$ is definable in $(\Z,+,0)$.
\item Suppose $\cM$ is a dp-minimal (or even just strong) expansion of $(\Z,+,0)$, which is not interdefinable with $(\Z,+,<,0)$. If $A\seq\Z^n$ is definable in both $\cM$ and $(\Z,+,<,0)$, then $A$ is definable in $(\Z,+,0)$.
\end{enumerate}
\end{corollary}

We again emphasize that part $(c)$ also requires the results cited above from \cite{ADHMS1} and \cite{DoGo} on expansions of $(\Z,+,<,0)$.  The dichotomy in part $(a)$ for Presburger definable sets uncovers a similar phenomenon as a result of Marker \cite{MaComp}: if $S\seq\R^{2n}$ is a semi-algebraic set (i.e. definable in $(\R,+,\cdot,\leq,0,1)$) then, interpreted as a subset of $\C^n$, $S$ is either constructible (i.e. definable in $(\C,+,\cdot,0,1)$) or, together with the complex field structure, $S$ defines any semi-algebraic set.

Finally, we emphasize that the analog of Theorem \ref{thm:premain} for nonstandard models of $\Th(\Z,+,0)$ fails. For example, if $(G,+,<,0)$ is a saturated model of $\Th(\Z,+,<,0)$, then we obtain intermediate structures strictly between $(G,+,0)$ and $(G,+,<,0)$ by expanding $(G,+,0)$ with the ordering on some interval $[0,\alpha]$, where $\alpha\in G^{>0}$ is nonstandard. We can similarly obtain counterexamples using other groups: expand $(\Q,+,0)$ by the ordering on the interval $[0,1]$ to obtain an intermediate structure strictly between $(\Q,+,0)$ and $(\Q,+,<,0)$.

\begin{question}
Let $(G,+,<,0)$ be a model of $\Th(\Z,+,<0)$ or $\Th(\Q,+,<,0)$. Is it true that any proper expansion of $(G,+,0)$, which is a reduct of $(G,+,<,0)$, defines the ordering on some interval? What about other ordered abelian groups?
\end{question}

The remainder of this paper is entirely devoted to the proof of Theorem \ref{thm:premain}. Therefore, we end this introduction with a brief outline of the argument. Section \ref{sec:def} gives basic definitions and reformulates Theorem \ref{thm:premain} as a statement about individual definable sets, namely: if $A\seq\Z^n$ is definable in $(\Z,+,<,0)$ then either $A$ is definable in $(\Z,+,0)$ or the ordering on $\Z$ is definable in the expansion $(\Z,+,0,A)$ (see Theorem \ref{thm:main}). The proof then proceeds by induction on $n$. In Section \ref{sec:1dim} we prove the base case $n=1$, which is a straightforward consequence of quantifier elimination in an appropriate definitional expansion of $(\Z,+,<,0)$. We then turn to several preliminaries needed for the induction step. In Section \ref{sec:module}, we use the classification of sets definable in $(\Z,+,0)$ to give a kind of cell decomposition for $(\Z,+,0)$, where ``cells'' are \emph{quasi-cosets}, i.e. cosets of subgroups with some lower rank piece removed. In Section \ref{sec:poly}, we prove two consequences of a result of Kadets \cite{Kad} on the \emph{inradius} of a polyhedron $P$ in $\R^n$ (i.e. the supremum over the radii of $n$-balls contained in $P$). Polyhedra arise naturally in our situation because of the fact that if $f\colon\Z^n\to \Z$ is definable in $(\Z,+,<,0)$, then there is a decomposition of $\Z^n$ into finitely many subsets definable in $(\Z,+,<,0)$ such that, on each subset, $f$ is the restriction of an affine transformation from $\R^n$ to $\R$ (defined over $\Z$).

Section \ref{sec:technical} begins the technical work toward the induction step of the proof of the main result. In particular, we isolate a subclass of definable sets in $(\Z,+,<,0)$ satisfying certain structural properties, and show that it suffices to only consider sets in this special subclass. Roughly speaking, this subclass is defined by specifying congruence classes, sorting infinite fibers from finite fibers, and uniquely identifying the endpoints of the intervals in fibers with a finite collection of affine transformations. Finally, in Section \ref{sec:proof}, we combine all of these tools to finish the main proof.

\subsection*{Acknowledgements} I would like to thank Dave Marker for first introducing me to this problem, and for many helpful conversations. I also thank Anand Pillay, Sergei Starchenko, Somayeh Vojdani, and Erik Walsberg.

\section{Definitions and reformulation of main result}\label{sec:def}

Given a structure $\cM$ in a first-order language $\cL$, we say that a subset $A\seq M^n$ is \emph{definable in $\cM$} to mean $A$ is $\cL$-definable \emph{with parameters from $M$}.

\begin{definition}
Suppose $\cM_1$ and $\cM_2$ are first-order structures, with the same underlying universe $M$, in first-order languages $\cL_1$ and $\cL_2$, respectively. 
\begin{enumerate}
\item We say $\cM_1$ is a \textbf{reduct} of $\cM_2$ (equivalently, $\cM_2$ is an \textbf{expansion} of $\cM_1$) if, for all $n>0$, every subset of $M^n$, which is definable in $\cM_1$, is also definable in $\cM_2$. 
\item We say $\cM_1$ and $\cM_2$ are \textbf{interdefinable} if $\cM_1$ is a reduct of $\cM_2$ and $\cM_2$ is a reduct of $\cM_1$.
\end{enumerate}
\end{definition}

Let $\Z$ denote the set of integers, and $\N$ the set of nonnegative integers. We let $\Zbar$ denote $\Z\cup\{\nv\infty,\infty\}$, and extend the ordering on $\Z$ to $\Zbar$ in the obvious way.

Given a sequence of sets $(A_1,\ldots,A_n)$, with $A_i\seq\Z^{n_i}$, we define the expansion $(\Z,+,0,A_1,\ldots,A_n)$ of $(\Z,+,0)$ where, with a slight abuse of notation, $A_i$ is an $n_i$-ary relation symbol interpreted as $A_i$.

\begin{definition}
Let $(A_1,\ldots,A_m)$ and $(B_1,\ldots,B_n)$ be sequences of sets, with $A_i\seq\Z^{m_i}$ and $B_i\seq\Z^{n_i}$. 
\begin{enumerate}
\item We say $(A_1,\ldots,A_m)$ is \textbf{definable from} $(B_1,\ldots,B_n)$ if $(\Z,+,0,A_1,\ldots,A_m)$ is a reduct of $(\Z,+,0,B_1,\ldots,B_m)$.
\item We say $(A_1,\ldots,A_m)$ and $(B_1,\ldots,B_n)$ are \textbf{interdefinable} if the structures $(\Z,+,0,A_1,\ldots,A_m)$ and $(\Z,+,0,B_1,\ldots,B_n)$ are interdefinable.
\end{enumerate}
\end{definition}

It is worth noting that, in the previous definition, we have been a little careless with our use of the word \emph{definable}. In particular, it would be more accurate to say, for instance, ``$(A_1,\ldots,A_m)$ is definable from $(B_1,\ldots,B_n)$ \emph{relative to $(\Z,+,0)$}''. However, throughout the paper we focus exclusively on expansions of $(\Z,+,0)$, and therefore always assume that we can use the group language when defining sets. 

Our two main structures are $(\Z,+,0)$ and $(\Z,+,<,0)$. The latter structure is often referred to as \emph{Presburger arithmetic}, since the theory of $(\Z,+,<,0)$ was first formally axiomatized by Presburger \cite{Presb} in 1929. 
We use the following terminology.

\begin{definition}
Fix $n>0$ and a subset $A\seq\Z^n$.
\begin{enumerate}
\item We say $A$ is a \textbf{Presburger set} if $A$ is definable in $(\Z,+,<,0)$.
\item We say $A$ \textbf{defines the ordering} if $\N$ is definable in $(\Z,+,0,A)$.
\end{enumerate}
\end{definition}

We now restate Theorem \ref{thm:premain}.

\begin{theorem}\label{thm:main}
Given $n>0$, if $A\seq\Z^n$ is a Presburger set then either $A$ is definable in $(\Z,+,0)$ or $A$ defines the ordering. In other words, $(\Z,+,0,A)$ is either interdefinable with $(\Z,+,0)$ or with $(\Z,+,<,0)$.
\end{theorem}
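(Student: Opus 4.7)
The plan is to induct on $n$, where the base case $n=1$ follows directly from quantifier elimination in Presburger arithmetic. Every Presburger subset of $\Z$ is a Boolean combination of arithmetic progressions $a+d\Z$ and half-lines $\{x:x\geq c\}$; if no half-lines appear, $A$ is already $(\Z,+,0)$-definable, and otherwise a set of the shape $(c+d\Z)\cap[c,\infty)$ is $(\Z,+,0)$-definable from $A$, from which one quickly recovers $\N$ up to a finite set, hence the ordering.

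For the inductive step, fix $n>1$ and assume the theorem for all smaller arities. Given a Presburger $A\seq\Z^n$, I would first apply Cluckers' cell decomposition to organize $A$ into finitely many cells, on each of which the bounding functions are restrictions of affine maps and the congruence constraints are fixed. The preparatory work outlined for Section~\ref{sec:technical} then reduces to the case where $A$ lies in the announced special subclass: the congruence class of each coordinate is constant throughout $A$, cells with infinite fibers over the first $n-1$ coordinates are separated from cells with uniformly finite fibers, and the endpoints of the finitely many intervals in each fiber are given by a fixed list of affine functions of the base. Because each of these structural features is itself definable in $(\Z,+,0,A)$, the reduction is legitimate: if one of the simpler auxiliary sets obtained this way defines the ordering, so does $A$.

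Having reduced to this special form, I would then argue a dichotomy. If every affine endpoint function is constant and every fiber is a finite union of cosets, the quasi-coset decomposition of Section~\ref{sec:module} exhibits $A$ as a Boolean combination of cosets of subgroups of $\Z^n$, so $A$ is $(\Z,+,0)$-definable. Otherwise, some cell attached to a polyhedral base with infinite inradius genuinely depends on the base coordinate via an affine function with nonzero slope. Here Kadets' bounds from Section~\ref{sec:poly} are the crucial tool: a polyhedron of infinite inradius intersected with a rank-$n$ quasi-coset cannot be covered by finitely many polyhedra of finite inradius. Consequently $A$ cannot be a finite Boolean combination of quasi-cosets, and the offset between two parallel coset strips separated by the differing affine endpoints yields an unbounded, linearly ordered family of parameters from which $\N$ can be defined inside $(\Z,+,0,A)$.

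The main obstacle I anticipate is the technical reduction announced for Section~\ref{sec:technical}. Isolating the congruence data, separating finite from infinite fibers, and uniquely identifying each affine endpoint of each fiber — all while verifying that the resulting simplifications remain $(\Z,+,0)$-definable from $A$, so that induction can be applied to auxiliary Presburger sets of strictly lower complexity — is where the bulk of the work will lie. Once that reduction is in hand, combining the quasi-coset decomposition with the inradius inequality to force the dichotomy is more geometric than model-theoretic, but care is still required to ensure that the ordering information extracted from the offending cell is strong enough to define all of $\N$, rather than merely a bounded fragment.
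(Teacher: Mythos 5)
Your overall architecture matches the paper's: induction on $n$, base case from quantifier elimination, Cluckers' cell decomposition, the three technical reductions, quasi-cosets, and Kadets' inradius bound. But the heart of the inductive step --- your ``dichotomy'' paragraph --- is both misstated and incomplete, in ways that matter.

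First, the dichotomy is keyed to the wrong quantity. Whether $A$ is definable in $(\Z,+,0)$ is not governed by whether the endpoint functions are \emph{constant}: the set $\{(x,y):x\leq y\leq x+5\}$ has non-constant endpoint functions of nonzero slope yet is a finite union of cosets of the diagonal subgroup, hence $(\Z,+,0)$-definable. The relevant condition, isolated in Claim \ref{claim:main}, is whether some \emph{difference} $f_s-g_t$ of a lower and an upper endpoint function is constant on the projection. Your second horn would therefore wrongly sweep up many $(\Z,+,0)$-definable sets, and your first horn is too restrictive to cover all of them.

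Second, and more seriously, the logical shape of your argument leaves the hardest step unproved. In the bad case you propose to \emph{directly define the ordering} from ``the offset between two parallel coset strips,'' producing ``an unbounded, linearly ordered family of parameters from which $\N$ can be defined.'' No construction is given, and none is needed: the paper never defines the ordering from an $(n+1)$-dimensional set in the induction step. It argues contrapositively --- assume $A$ does not define the ordering; then neither does $\pi(A)$, so by the primary induction hypothesis $\pi(A)$ is $(\Z,+,0)$-definable and, by Theorem \ref{thm:qc}, may be taken to be a single quasi-coset $X$. Kadets' bound then enters not to show that $A$ fails to be a Boolean combination of quasi-cosets, but to prove Claim \ref{claim:main}: if no $f_s-g_t$ were constant on $X$, the polyhedra $P^+(\sigma,\tau)$ recording the order type of the endpoint values would cover $X$ up to a low-rank set, one would have infinite inradius by Lemma \ref{lem:polyint}, the reversed polyhedron $P^-(\mu,\nu)$ would too by Lemma \ref{lem:polyinv}, and an integer point of it in $X$ forces two endpoint functions to coincide --- a contradiction. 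Finally, even granting that some $f_s-g_t$ is constant, you still need Lemma \ref{lem:fibers}: a secondary induction on the number $k$ of endpoint pairs, peeling off the pair with constant difference (whose contribution admits an explicit finitary $(\Z,+,0)$-definition) and re-sorting the remaining fibers. That secondary induction is absent from your proposal, and without it there is no route from ``some endpoints differ by a constant'' to ``$A$ is definable in $(\Z,+,0)$.''
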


It is worth again emphasizing that, for us, ``interdefinable'' means \emph{with parameters}. For example, $1$ is definable in $(\Z,+,<,0)$ without parameters, but not in $(\Z,+,0)$. Of course, any element of $\Z$ is definable from $1$ without parameters, and so one could avoid this issue by replacing $(\Z,+,0)$ with $(\Z,+,0,1)$.

Given $n>0$ and $x,y\in\Z$,  we write $x\equiv_n y$ if $x-y\in n\Z$. For a fixed $n>0$, the binary relation $\equiv_n$ is definable in $(\Z,+,0)$. The main model theoretic ingredients necessary for the proof of Theorem \ref{thm:main}, are the facts that $(\Z,+,-,0,(\equiv_n)_{n>0})$ and $(\Z,+,-,<,0,1,(\equiv_n)_{n>0})$ both have quantifier elimination (see, respectively, Exercise 3.4.6 and Corollary 3.1.21 of \cite{Mabook} for details).  As a result, definable sets in $(\Z,+,0)$ and $(\Z,+,<,0)$ can be described explicitly (see, respectively, Fact \ref{fact:module} and Fact \ref{fact:CD}).

The proof of Theorem \ref{thm:main} will proceed by induction on $n>0$. The base case $n=1$ can be handled easily, and so we will dispense with it right away.

\section{The one-dimensional case}\label{sec:1dim}

In this section, we prove Theorem \ref{thm:main} for the case $n=1$. The main tool is the following description of Presburger subsets of $\Z$. 

\begin{proposition}\label{prop:1dim}
Suppose $A\seq\Z$ is a Presburger set. Then 
$$
A=F\cup \bigcup_{i=1}^k\{a_i+b_in:n\in\N\},
$$
for some finite set $F$, $k\in\N$ (possibly $0$), and $a_1,\ldots,a_k,b_1,\ldots,b_k\in\Z$.
\end{proposition}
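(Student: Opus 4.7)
The plan is to apply the quantifier elimination result for $(\Z,+,<,0)$ in the language augmented by $1$, $-$, and the congruence relations $\equiv_n$, as recalled at the end of Section~\ref{sec:def}. Under this QE, the Presburger set $A\seq\Z$ is defined by a quantifier-free $\cL$-formula $\varphi(x)$ in a single free variable, with parameters from $\Z$. Up to equivalence, the atomic formulas in one variable $x$ take one of the forms $ax=c$, $ax<c$, or $ax\equiv_n c$, with $a,c\in\Z$ and $n>0$. Their solution sets in $\Z$ are, respectively, a singleton (or $\emptyset$ or $\Z$); a half-line (or $\emptyset$ or $\Z$); and a finite union of full arithmetic progressions of the form $c'+d\Z$.

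Writing $\varphi$ in disjunctive normal form, it suffices to prove the conclusion for a single disjunct, i.e., for any finite intersection of sets drawn from the three families above together with their complements. The intersection of finitely many intervals and cointervals in $\Z$ is a finite union of intervals, each possibly empty, finite, one-sided infinite, or all of $\Z$; and the intersection of finitely many (finite unions of) congruence classes is again a finite union of congruence classes, by the Chinese Remainder Theorem. Distributing these two kinds of intersection reduces the claim to showing that every set of the form $I\cap(c+d\Z)$, with $I$ an interval in $\Z$, $c\in\Z$, and $d\in\N$, has the required shape.

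This last step is handled by a brief case analysis. If $I$ is bounded, $I\cap(c+d\Z)$ is finite and absorbs into the set $F$. If $I=[\alpha,\infty)$ and $a$ is the least element of $I\cap(c+d\Z)$, then $I\cap(c+d\Z)=\{a+dn:n\in\N\}$; symmetrically, if $I=(-\infty,\beta]$ one gets $\{a+(-d)n:n\in\N\}$. If $I=\Z$, then $I\cap(c+d\Z)=c+d\Z=\{c+dn:n\in\N\}\cup\{c+(-d)n:n\in\N\}$, again of the desired form. Taking the finite union of all such pieces over the disjuncts, and collecting the finite pieces into $F$, completes the proof. There is no genuine obstacle beyond bookkeeping: the only mildly delicate point is the shift needed to start a one-sided progression at the correct endpoint of $I$, with the at most finitely many leftover points being absorbed into $F$.
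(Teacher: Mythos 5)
Your argument is correct and is exactly the route the paper indicates: the paper omits the proof of Proposition~\ref{prop:1dim} but states that it ``follows fairly easily from quantifier elimination for Presburger arithmetic,'' and your DNF-plus-case-analysis fills in that sketch faithfully (the reduction to sets $I\cap(c+d\Z)$ and the absorption of finite leftovers into $F$ are the only points of substance, and you handle both). No gaps.
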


The proof is omitted. It follows fairly easily from quantifier elimination for Presburger arithmetic, and even more easily from cell decomposition for Presburger sets (due to Cluckers \cite{Cluck}, see Section \ref{sec:CD}). We have formulated Proposition \ref{prop:1dim} to mimic a recent result of Dolich and Goodrick \cite[Theorem 2.18]{DoGo}, which is a generalization to archimedean ordered abelian groups with \emph{strong} theories.

\begin{definition}
Given $a,b\in\Zbar$ and $m,c\in\N$, with $c<m$, define
$$
[a,b]^c_m=\{x\in\Z:a\leq x\leq b,~x\equiv_m c\}.
$$
\end{definition}

\begin{corollary}\label{cor:1dim}
Suppose $A\seq\Z$ is a Presburger set. Then either $A$ is definable in $(\Z,+,0)$ or $A$ defines the ordering.
\end{corollary}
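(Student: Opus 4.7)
The plan is to combine Proposition~\ref{prop:1dim} with Fact~\ref{fact:module} and perform a residue-class analysis of $A$. First I would apply Proposition~\ref{prop:1dim} to write $A = F \cup \bigcup_{i=1}^k \{a_i + b_i n : n \in \N\}$, and let $N$ be the least common multiple of the nonzero $|b_i|$ (or $N=1$ if all $b_i = 0$). A short check shows that each ray $\{a_i + b_i n : n \in \N\}$ meets a fixed class $c + N\Z$ in either the empty set, a positive tail $[m,\infty]^c_N$ (when $b_i > 0$), or a negative tail $[-\infty,m]^c_N$ (when $b_i < 0$). Consequently
\[
A \cap (c + N\Z) \;=\; [p_c, \infty]^c_N \;\cup\; [-\infty, q_c]^c_N \;\cup\; F_c,
\]
for some $p_c,q_c \in \Zbar$ and a finite set $F_c \subseteq c + N\Z$, where $p_c = \infty$ or $q_c = -\infty$ means the corresponding tail is absent.

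Second, I would show that $A$ is definable in $(\Z,+,0)$ if and only if, for every $c$, either $p_c = \infty$ and $q_c = -\infty$ (so $A \cap (c+N\Z)$ is finite) or both $p_c,q_c \in \Z$ (so $A \cap (c+N\Z)$ is cofinite in $c+N\Z$). The ``if'' direction is immediate: finite and cofinite subsets of a coset of $N\Z$ are visibly definable in $(\Z,+,0)$ with parameters. The ``only if'' direction invokes Fact~\ref{fact:module}: a set definable in $(\Z,+,0)$ is a Boolean combination of cosets of subgroups of $\Z$, and after enlarging $N$ to be a multiple of all relevant moduli, each such coset meets $c + N\Z$ in either the full class, a singleton, or the empty set, so any Boolean combination of them restricts to $c + N\Z$ as a finite or cofinite set. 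If this finite-or-cofinite condition holds for every $c$, we are done.

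In the remaining case, some $A \cap (c + N\Z)$ is \emph{one-sided infinite}: exactly one of $p_c, q_c$ lies in $\Z$. Assume WLOG $p_c \in \Z$ and $q_c = -\infty$; the dual case is handled identically after applying $x \mapsto -x$, which is $(\Z,+,0)$-definable. In $(\Z,+,0,A)$, using the parameters $p_c, N$, the set
\[
T := \{n \in \Z : p_c + Nn \in A\}
\]
is definable. For $n \geq 0$, the element $p_c + Nn$ lies in the upper tail $[p_c,\infty]^c_N \subseteq A$, so $\N \subseteq T$. For $n < 0$, membership $n \in T$ reduces to $p_c + Nn \in F_c$, which holds for only finitely many $n$. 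Hence $T = \N \cup F'$ for an explicit finite $F' \subseteq \Z_{<0}$. Using the finitely many elements of $F'$ as additional parameters, $\N$ is defined as the set of $n \in T$ distinct from each element of $F'$, and so $A$ defines the ordering.

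The main conceptual step is the characterization in the second paragraph linking $(\Z,+,0)$-definability to finite-or-cofinite behavior on residue classes; the rest is a one-line first-order construction in which the finite discrepancy $F_c$ is absorbed by naming its elements as parameters.
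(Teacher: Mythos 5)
Your proof is correct and follows essentially the same route as the paper: both apply Proposition~\ref{prop:1dim}, pass to a common modulus, observe that each residue class meets $A$ in a union of at most two tails plus a finite set, and then either recognize $A$ as a Boolean combination of cosets or pull back a one-sided class along $x\mapsto p_c+Nx$ to define $\N$. The only cosmetic difference is that the paper absorbs the finite discrepancy at the outset by replacing $A$ with a finite symmetric difference, whereas you discard it at the end by naming finitely many parameters (and your ``only if'' direction in the second paragraph, while true, is not actually needed for the case split).
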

\begin{proof}
Since any finite set is definable in $(\Z,+,0)$, we may assume $A$ is infinite. Since $A$ is interdefinable with $A\smd F$ for any finite $F\seq\Z$, we may use Proposition \ref{prop:1dim} to assume
$$
A=\bigcup_{i=1}^k(\nv\infty,0]^{c_i}_{m_i}\cup\bigcup_{i=1}^l[0,\infty)^{d_i}_{n_i},
$$
where $k,l\in\N$ (possibly $0$), and $c_i,d_i,m_i,n_i\in\N$ with $c_i<m_i$ and $d_i<n_i$. Let $m$ be the least common multiple of $m_1,\ldots,m_k,n_1,\ldots,n_l$. Then, by adding more terms to the union, we may write
$$
A=\bigcup_{i=1}^{k_*}(\nv\infty,0]^{c_i}_m\cup\bigcup_{i=1}^{l_*}[0,\infty)^{d_i}_m
$$
where $0\leq c_i<m$ and $0\leq d_j<m$ for all $1\leq i\leq k_*$ and $1\leq j\leq l_*$. 

Suppose that there is some $1\leq j\leq l_*$ such that $d_j\neq c_i$ for all $1\leq i\leq k_*$. Then 
$$
x\in\N\miff d_j+mx\in A,
$$
and so $A$ defines the ordering. By a similar argument, if there is some $1\leq i\leq k_*$ such that $c_i\neq d_j$ for all $1\leq j\leq l_*$, then $A$ defines the ordering. So we may assume $\{c_1,\ldots,c_{k_*}\}=\{d_1,\ldots,d_{l_*}\}$. Then
$$
A=\bigcup_{i=1}^{k_*}(\nv\infty,\infty)^{c_i}_m,
$$
which is definable in $(\Z,+,0)$.
\end{proof}

From the proof, we have the following useful observation.

\begin{corollary}\label{cor:bdd}
Suppose $A\seq \Z$ is an infinite Presburger set, which is bounded above or bounded below. Then $A$ defines the ordering.
\end{corollary}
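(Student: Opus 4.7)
The plan is to piggyback on the normal form and case analysis already carried out in the proof of Corollary~\ref{cor:1dim}, observing that the boundedness hypothesis kills one side of the disjunction.

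By symmetry (replacing $A$ with $-A$) we may assume $A$ is bounded below. First I would apply Proposition~\ref{prop:1dim} to write
$$
A=F\cup\bigcup_{i=1}^k\{a_i+b_in:n\in\N\}
$$
and note that every progression with $b_i<0$ is unbounded below, so boundedness below forces $b_i\geq 0$ for all $i$; progressions with $b_i=0$ are singletons, so can be absorbed into $F$. Since $A$ is infinite and $F$ is finite, at least one $b_i$ is strictly positive. Using that $A$ is interdefinable with $A\smd F$, I would then run the normalization from the proof of Corollary~\ref{cor:1dim} (common-denominator trick using the lcm $m$ of the $b_i$'s), putting $A$ in the shape
$$
A=\bigcup_{i=1}^{k_*}(\nv\infty,0]^{c_i}_m\cup\bigcup_{i=1}^{l_*}[0,\infty)^{d_i}_m,
$$
with $0\leq c_i,d_i<m$. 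The point is that after absorbing finitely many points, the bounded-below hypothesis forces $k_*=0$, since each summand $(\nv\infty,0]^{c_i}_m$ is infinite going to $-\infty$.

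With $k_*=0$ and $l_*\geq 1$, pick any $j\leq l_*$. The condition ``$d_j\neq c_i$ for all $i$'' from the proof of Corollary~\ref{cor:1dim} holds vacuously, and so the equivalence
$$
x\in\N\miff d_j+mx\in A
$$
witnesses that $\N$ is definable in $(\Z,+,0,A)$, i.e.\ $A$ defines the ordering. The only thing to double-check is that the reduction $A\leadsto A\smd F$ really preserves boundedness below (it does, as removing/adding finitely many points cannot disturb a lower bound), so no obstacles arise; this is essentially a direct corollary of the previous proof rather than a new argument.
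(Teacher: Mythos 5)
Your proof is correct and is essentially the paper's own argument: the paper derives Corollary~\ref{cor:bdd} directly ``from the proof'' of Corollary~\ref{cor:1dim}, exactly by noting that boundedness (below, say) forces $k_*=0$ in the normal form, so the condition ``$d_j\neq c_i$ for all $i$'' holds vacuously and the displayed equivalence defines $\N$. No gaps.
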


\section{Quasi-coset decomposition in $(\Z,+,0)$}\label{sec:module}

In the induction step of the proof of Theorem \ref{thm:main}, we will use the induction hypothesis to conclude that if a Presburger set $A\seq\Z^{n+1}$ does not define the ordering, then the projection of $A$ to $\Z^n$ is definable in $(\Z,+,0)$. Therefore, in this section, we develop a certain decomposition of subsets of $\Z^n$ definable in $(\Z,+,0)$. We remind the reader that when we say ``definable'', with no other specification, we mean ``definable in $(\Z,+,0)$''.

We start with the following classical fact (see, e.g., \cite{PiGST} or \cite{Prestbook}).

\begin{fact}\label{fact:module}
A set $A\seq\Z^n$ is definable in $(\Z,+,0)$ if and only if it is a (finite) Boolean combination of cosets of subgroups of $\Z^n$.
\end{fact}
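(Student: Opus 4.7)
The plan is to lean on the quantifier elimination result recalled just before this fact: the theory of $(\Z,+,0)$ admits QE in the expanded language with a constant $1$, a binary function $-$, and binary relations $\equiv_m$ for each $m>0$. Granting this, every definable subset of $\Z^n$ is a finite Boolean combination of atomic formulas, and in the expanded language the atomic formulas in variables $x_1,\ldots,x_n$ reduce (after moving everything to one side) to the two shapes $\ell(x)=0$ and $\ell(x)\equiv_m 0$, where $\ell(x)=a_1x_1+\cdots+a_nx_n+c$ is an affine $\Z$-linear form and $m>0$.

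For the forward direction, I would verify that each such atomic set is either empty or a coset of a subgroup of $\Z^n$. Indeed, $\{x\in\Z^n:\ell(x)=0\}$, if nonempty, is a translate of the kernel of the homomorphism $\Z^n\to\Z$ determined by the linear part of $\ell$, and $\{x\in\Z^n:\ell(x)\equiv_m 0\}$ is the preimage of a single residue under the composite $\Z^n\to\Z\to\Z/m\Z$; both are cosets of subgroups. Boolean combinations then translate directly, finishing this direction. For the backward direction, I would show conversely that every coset of every subgroup $H\leq\Z^n$ is definable in $(\Z,+,0)$. By Smith normal form applied to the inclusion $H\hookrightarrow\Z^n$ (equivalently, by the structure theorem applied to $\Z^n/H$), there is a $\Z$-linear surjection $\pi:\Z^n\twoheadrightarrow\Z^r\oplus\Z/m_1\Z\oplus\cdots\oplus\Z/m_s\Z$ whose kernel is exactly $H$. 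Reading off the coordinates of $\pi$ exhibits $H$ as the common solution set of finitely many conditions of the forms $\ell_i(x)=0$ and $\ell'_j(x)\equiv_{m_j}0$; shifting the right-hand sides to match $\pi(v)$ then defines the coset $v+H$. All such conditions are atomic in the expanded language, so each coset, and hence any Boolean combination of cosets, is definable.

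The only nontrivial ingredient is this uniform defining description of subgroups of $\Z^n$, which is exactly what Smith normal form (or equivalently the structure theorem for finitely generated abelian groups) provides; beyond that, the argument is essentially bookkeeping on top of quantifier elimination, so I do not anticipate any serious obstacles.
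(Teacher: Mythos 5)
Your proof is correct, and it is the standard argument. Note that the paper does not actually prove this statement: it is labeled a \emph{Fact} and cited as classical (to Prest's and Pillay's books), with only the remark that Fact \ref{fact:subbasis} yields the direction ``Boolean combinations of cosets are definable.'' Your two halves line up exactly with the ingredients the paper itself points to: the forward direction via quantifier elimination in the language with $1$, $-$, and $\equiv_m$ (identifying each atomic set $\ell(\xbar)=c$ or $\ell(\xbar)\equiv_m c$ as empty or a coset of the kernel of the corresponding homomorphism $\Z^n\to\Z$ or $\Z^n\to\Z/m\Z$), and the converse via Smith normal form, which is precisely the content of Fact \ref{fact:subbasis}. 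The only point worth making explicit is that the change-of-basis matrix expressing the adapted coordinates is in $GL_n(\Z)$, so the coordinate functionals you read the congruence and equality conditions off of are genuinely $\Z$-linear in the original variables; with that observed, there is no gap.
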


The goal of this section is to decompose definable sets in $\Z^n$ into finite unions of definable pieces, such that each piece is ``essentially" a coset (specifically, a coset with some smaller dimensional set removed).

Recall that, given $n>0$, $\Z^n$ is an example of a \emph{free abelian group}, meaning it is an abelian group generated by a linearly independent basis of unique cardinality, called the \emph{rank} of the group. Recall also that any subgroup of a free abelian group is free abelian, which can be shown using the following classical fact (see, e.g., \cite[Theorem 12.4.11]{Artinbook}).

\begin{fact}\label{fact:subbasis}
Suppose $G$ is a free abelian group of rank $n$ and $H\leq G$ is a nontrivial subgroup. Then there is a basis $\{\abar^1,\ldots,\abar^n\}$ of $G$, an integer $k\leq n$, and positive integers $d_1,\ldots,d_k$, such that $d_i$ divides $d_{i+1}$ for all $1\leq i<k$ and $\{d_1\abar^1,\ldots,d_k\abar^k\}$ is a basis for $H$ (in particular, $H$ is free abelian of rank $k$).
\end{fact}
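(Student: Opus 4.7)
My plan is to proceed by induction on $n$, the rank of $G$. The base case $n=1$ is trivial: $G\cong\Z$, every nontrivial subgroup of $\Z$ has the form $d\Z$ for a unique $d>0$, and the conclusion holds with $k=1$.

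For the inductive step, the central move is to extract a first basis vector $v_1$ and first invariant factor $d_1$ by an extremal argument. Let $d_1$ be the least positive integer of the form $\varphi(h)$ as $\varphi$ ranges over $\Z$-linear maps $G\to\Z$ and $h$ ranges over $H$; this minimum exists because any nonzero $h\in H$ has some nonzero coordinate with respect to any fixed basis of $G$. Fix witnesses $\varphi_1,h_1$ with $\varphi_1(h_1)=d_1$. A Euclidean-division argument against minimality then yields two divisibility facts: $d_1\mid\psi(h_1)$ for every $\Z$-linear $\psi\colon G\to\Z$ (otherwise replacing $\psi$ by $\psi-q\varphi_1$ for the quotient $q$ would manufacture a smaller positive value), and $d_1\mid\varphi_1(h)$ for every $h\in H$ (otherwise replacing $h$ by $h-qh_1$ does the same). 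The first fact, applied to the coordinate functionals of a fixed basis, shows $h_1=d_1v_1$ for some $v_1\in G$, and a direct computation gives $\varphi_1(v_1)=1$.

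With $v_1$ in hand, the map $g\mapsto\varphi_1(g)v_1+(g-\varphi_1(g)v_1)$ splits $G=\Z v_1\oplus\ker\varphi_1$, and the second divisibility fact shows this restricts to $H=\Z h_1\oplus(H\cap\ker\varphi_1)$: for $h\in H$, the coefficient $\varphi_1(h)/d_1$ is an integer, so the ``$v_1$-part'' of $h$ is an integer multiple of $h_1\in H$, forcing the complementary summand to lie in $H\cap\ker\varphi_1$. Now apply the induction hypothesis to the rank $n-1$ pair $(\ker\varphi_1,\,H\cap\ker\varphi_1)$, obtaining a basis $v_2,\dots,v_n$ of $\ker\varphi_1$ and positive integers $d_2\mid d_3\mid\cdots\mid d_k$ with $d_2v_2,\dots,d_kv_k$ a basis of $H\cap\ker\varphi_1$. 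Concatenation produces the basis $\{v_1,\dots,v_n\}$ of $G$ and the basis $\{d_1v_1,\dots,d_kv_k\}$ of $H$.

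The step requiring the most care, and the main obstacle I expect, is verifying the boundary divisibility $d_1\mid d_2$ needed to close up the chain $d_1\mid d_2\mid\cdots\mid d_k$. For this I would test minimality against the $\Z$-linear functional $\psi$ sending $v_1\mapsto 1$, $v_2\mapsto 1$, and the remaining basis vectors to $0$. Since $h_1+d_2v_2\in H$ and $\psi(h_1+d_2v_2)=d_1+d_2$, re-running the Euclidean-division trick (replacing $\psi$ by $\psi-q\varphi_1$ for the quotient of $d_1+d_2$ by $d_1$) would exhibit an element of $H$ and a functional whose value lies strictly between $0$ and $d_1$ unless $d_1\mid d_1+d_2$; hence $d_1\mid d_2$. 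Once the extremal $d_1$ is set up so that it simultaneously controls divisibility from both sides, the remainder of the argument is bookkeeping.
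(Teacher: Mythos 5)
The paper does not prove this statement; it cites it as classical (Artin, Theorem 12.4.11), where the argument runs through diagonalization of the integer relation matrix (Smith normal form via row and column operations). Your proof is the other standard route --- the ``extremal functional'' argument: take $d_1$ minimal among positive values $\varphi(h)$ for $\varphi\in\mathrm{Hom}(G,\Z)$ and $h\in H$, extract $h_1=d_1v_1$ with $\varphi_1(v_1)=1$, split $G=\Z v_1\oplus\ker\varphi_1$ and $H=\Z h_1\oplus(H\cap\ker\varphi_1)$, and induct. This is correct as far as it goes, and your treatment of the step you flag as delicate is fine: with $\psi(v_1)=\psi(v_2)=1$ and $\psi(v_i)=0$ otherwise, the element $h'=h_1+d_2v_2\in H$ satisfies $\varphi_1(h')=d_1$ and $\psi(h')=d_1+d_2$, so $(\psi-q\varphi_1)(h')$ realizes the remainder of $d_1+d_2$ modulo $d_1$, and minimality forces $d_1\mid d_2$. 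The matrix-reduction proof is more algorithmic; yours is cleaner for establishing the divisibility chain and generalizes verbatim to PIDs.

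There is one point you pass over that is not free in this context. To apply the induction hypothesis to the pair $(\ker\varphi_1,\,H\cap\ker\varphi_1)$ you must know that $\ker\varphi_1$ is free abelian of rank $n-1$. You cannot simply invoke ``subgroups of free abelian groups are free,'' because the paper (and standard practice) derives that statement \emph{from} the Fact you are proving, so this would be circular. The gap is easily closed: either first prove, by a separate and easier induction on $n$, that every subgroup of a free abelian group of rank $n$ is free of rank at most $n$ (project onto the last basis coordinate; the kernel of the restricted projection is handled by induction and the image is $0$ or infinite cyclic, so the extension splits), and then note that $G=\Z v_1\oplus\ker\varphi_1$ forces $\ker\varphi_1$ to have rank exactly $n-1$; or observe directly that $\ker\varphi_1$ is a direct summand of $G$, hence finitely generated projective over $\Z$, hence free. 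Either patch is routine, but some such sentence is needed before the inductive call is legitimate.
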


Note that this fact can be used to prove one direction of Fact \ref{fact:module}, namely, that Boolean combinations of cosets are definable. The following corollary of Fact \ref{fact:subbasis} will be useful.

\begin{corollary}\label{cor:finindex}
Suppose $G$ is a free abelian group of rank $n$ and $H\leq G$ is a subgroup of rank $k\leq n$. Then $[G:H]$ is finite if and only if $k=n$.
\end{corollary}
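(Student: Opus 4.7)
My plan is to derive this directly from Fact \ref{fact:subbasis} by computing the quotient $G/H$ explicitly in the adapted basis.

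First I would dispose of the trivial cases. If $H$ is trivial then $k=0$, and $[G:H]=|G|$ is finite precisely when $n=0$, so the statement holds. Assuming $H$ is nontrivial (so $k\geq 1$), I would apply Fact \ref{fact:subbasis} to get a basis $\{\bar{a}^1,\ldots,\bar{a}^n\}$ of $G$ and positive integers $d_1,\ldots,d_k$ with $\{d_1\bar{a}^1,\ldots,d_k\bar{a}^k\}$ a basis of $H$. The $\Z$-linear isomorphism $G\to\Z^n$ sending $\bar{a}^i\mapsto e_i$ (the standard basis vector) carries $H$ onto the subgroup $d_1\Z\times\cdots\times d_k\Z\times\{0\}^{n-k}$ of $\Z^n$, so
$$
G/H\;\cong\;\Z/d_1\Z\times\cdots\times\Z/d_k\Z\times\Z^{n-k}.
$$

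If $k=n$, the right-hand side is the finite group $\Z/d_1\Z\times\cdots\times\Z/d_n\Z$ of order $d_1\cdots d_n$, so $[G:H]<\infty$. Conversely, if $k<n$, the quotient contains a direct factor $\Z^{n-k}$ with $n-k\geq 1$, and in particular contains the infinite cyclic subgroup generated by the image of $\bar{a}^{k+1}$; hence $[G:H]=\infty$.

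There is no real obstacle here: the entire argument is essentially a bookkeeping exercise once Fact \ref{fact:subbasis} is in hand, and the only mild subtlety is remembering to handle the trivial subgroup separately (since Fact \ref{fact:subbasis} is stated for $H$ nontrivial).
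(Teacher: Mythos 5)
Your proof is correct and is exactly the derivation the paper intends (the corollary is stated there without proof, as an immediate consequence of Fact \ref{fact:subbasis}): pass to the adapted basis, identify $G/H$ with $\Z/d_1\Z\times\cdots\times\Z/d_k\Z\times\Z^{n-k}$, and read off finiteness of the index. Your separate treatment of the trivial subgroup is also the right bookkeeping, since Fact \ref{fact:subbasis} assumes $H$ nontrivial.
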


We now define a notion of rank for arbitrary subsets $X\seq\Z^n$. As usual, the rank of the trivial subgroup $\{0\}$ is $0$.

\begin{definition}
$~$
\begin{enumerate}
\item If $C\seq\Z^n$ is a coset of a rank $k$ subgroup of $\Z^n$, then we set $\rk(C)=k$. 
\item Given a nonempty set $X\seq\Z^n$, define the \textbf{rank of $X$}, denoted $\rk(X)$, to be the minimum integer $k\in\{0,1,\ldots,n\}$ such that $X$ is contained in a finite union of cosets of rank at most $k$. Set $\rk(\emptyset)=\nv 1$.
\end{enumerate}
\end{definition}

Using Corollary \ref{cor:finindex}, the reader may check that if $C\seq\Z^n$ is a coset then the rank of $C$, as defined in part (1) of the previous definition, agrees with the rank as defined in part (2). So our notion of rank is well-defined. From the definitions, it follows that $\rk(X\cup Y)=\max\{\rk(X),\rk(Y)\}$ for any $X,Y\seq\Z^n$.

\begin{definition}
Fix  linearly independent $\alpha=\{\abar^1,\ldots,\abar^k\}\subset\Z^n$, and let $G=\langle \abar^1,\ldots,\abar^k\rangle\leq\Z^n$.
\begin{enumerate}
\item Define the group isomorphism $\Phi_\alpha\colon G\to \Z^k$ such that 
$$
\Phi_\alpha(\abar^t)=(0,\ldots,0,1,0,\ldots,0)\in\Z^k,
$$
where $1$ is in the $t\uth$ coordinate.
\item Fix $\cbar\in\Z^k$ and set $C=\cbar+G$. Define the bijection $\Phi_{\alpha,\cbar}\colon C\to \Z^k$ such that $\Phi_{\alpha,\cbar}(\xbar)=\Phi_\alpha(\xbar-\cbar)$.
\end{enumerate}
\end{definition}

Note that if $\alpha$ is a basis for a rank $k$ subgroup $G\leq\Z^n$ then $\Phi_\alpha$ is a an isomorphism of free abelian groups, and therefore can be represented as multiplication by an integer matrix (see \cite[Chapter 12]{Artinbook}). In particular, if $\cbar\in\Z^n$ then $\Phi_{\alpha,\cbar}$ is definable in $(\Z,+,0)$ (as a function on the definable set $C=\cbar+G$).

\begin{remark}
While it will not be necessary for our results, it is also worth pointing out that if $X\seq\Z^n$ is definable in $(\Z,+,0)$, then $\rk(X)=U(X)$ (see \cite{PiGST}). In particular, Lemma \ref{lem:rectcoset} and Corollary \ref{cor:rectcoset}, which discuss rank-preserving properties of definable bijections on definable sets, are very specific manifestations of the fact that $U$-rank is preserved this way in general. To avoid bringing in this extra technology, we give the (short) proofs of these two results.
\end{remark}

\begin{lemma}\label{lem:rectcoset}
Let $G\leq\Z^n$ be a subgroup with basis $\alpha=\{\abar^1,\ldots,\abar^k\}$.  Fix $\cbar\in\Z^n$, and let $C=\cbar+G$. Then the map $D\mapsto \Phi_{\alpha,\cbar}(D)$ is a rank-preserving bijection between the collection of cosets $D$ in $\Z^n$ such that $D\seq C$, and the collection of cosets in $\Z^k$.
\end{lemma}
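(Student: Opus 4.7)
The plan is to observe that $\Phi_{\alpha,\cbar}$ factors as a translation by $-\cbar$ followed by the group isomorphism $\Phi_\alpha\colon G \to \Z^k$, and that both operations send cosets to cosets and preserve the rank of subgroups. The main task is to verify that every coset $D \seq C$ has the form $\bar{d} + H$ with $H \leq G$, so that $\Phi_\alpha$ can be applied to its underlying subgroup.

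First, I would show that if $D \seq C$ is a coset and $\bar{d} \in D$, then $D = \bar{d} + H$ for some subgroup $H \leq G$. Indeed, given $h \in H$, both $\bar{d}$ and $\bar{d} + h$ lie in $\cbar + G$, so their difference $h$ lies in $G$; hence $H \leq G$. Since $\Phi_\alpha\colon G \to \Z^k$ is a group isomorphism that sends the basis $\alpha$ to the standard basis of $\Z^k$, it carries any basis of $H$ to a basis of $\Phi_\alpha(H)$, and in particular $\rk(H) = \rk(\Phi_\alpha(H))$.

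A direct computation then yields
\[
\Phi_{\alpha,\cbar}(D) = \Phi_\alpha(D - \cbar) = \Phi_\alpha\bigl((\bar{d} - \cbar) + H\bigr) = \Phi_\alpha(\bar{d} - \cbar) + \Phi_\alpha(H),
\]
which is a coset in $\Z^k$ whose rank equals $\rk(\Phi_\alpha(H)) = \rk(H) = \rk(D)$. For surjectivity, given any coset $\bar{e} + K$ in $\Z^k$, its preimage $\cbar + \Phi_\alpha^{-1}(\bar{e}) + \Phi_\alpha^{-1}(K)$ is a coset in $\Z^n$ contained in $C$ whose image under $\Phi_{\alpha,\cbar}$ is $\bar{e} + K$; injectivity is automatic since $\Phi_{\alpha,\cbar}$ is a bijection from $C$ to $\Z^k$.

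The whole argument is essentially bookkeeping, and there is no serious obstacle. The one point that requires a moment's thought is the preliminary observation that $H \leq G$, which is what legitimizes feeding the underlying subgroup of $D$ into $\Phi_\alpha$ and thereby reduces the lemma to the fact that a group isomorphism induces a rank-preserving bijection between subgroups.
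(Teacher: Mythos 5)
Your proof is correct and follows essentially the same route as the paper's: write a coset $D\seq C$ as $\cbar+\gbar+H$ with $H\leq G$, push it through the isomorphism $\Phi_\alpha$ to get a coset of $\Phi_\alpha(H)$ of the same rank, and obtain surjectivity by pulling back a coset of $\Z^k$ along $\Phi_\alpha^{-1}$. The only difference is cosmetic (you spell out why $H\leq G$ and note that injectivity on cosets follows from bijectivity on points), so nothing further is needed.
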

\begin{proof}
First, if $D$ is a coset in $\Z^n$ and $D\seq C$ then $D=\cbar+\gbar+H$ for some $\gbar\in G$ and subgroup $H\leq G$. A direct calculation then shows $\Phi_{\alpha,\cbar}(D)=\Phi_\alpha(\gbar+H)=\Phi_\alpha(\gbar)+\Phi_\alpha(H)$.  So $\Phi_{\alpha,\cbar}$ is injective from the collection of cosets $D$ in $\Z^n$ such that $D\seq C$, to the collection of cosets in $\Z^k$. Note also that, since $\Phi_\alpha$ is an isomorphism, 
$$
\rk(\Phi_{\alpha,\cbar}(D))=\rk(\Phi_\alpha(H))=\rk(H)=\rk(D).
$$
Finally, to show surjectivity, fix $K\leq\Z^k$ and $\ubar\in\Z^k$. Let $H=\Phi\inv_\alpha(K)\leq G$ and $\gbar=\Phi\inv_\alpha(\ubar)\in G$. Then, setting $D=\cbar+\gbar+H$, we have $D\seq C$ and $\Phi_{\alpha,\cbar}(D)=\ubar+K$. 
\end{proof}

\begin{corollary}\label{cor:rectcoset}
Let $G\leq\Z^n$ be a subgroup with basis $\alpha=\{\abar^1,\ldots,\abar^k\}$.  Fix $\cbar\in\Z^n$, and let $C=\cbar+G$. If $X\seq C$ then $\rk(\Phi_{\alpha,\cbar}(X))=\rk(X)$.
\end{corollary}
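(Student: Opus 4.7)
The plan is to reduce the computation of $\rk(X)$, for $X\seq C$, to coverings of $X$ by cosets \emph{that are already contained in $C$}, and then transport these coverings through $\Phi_{\alpha,\cbar}$ using Lemma \ref{lem:rectcoset}. The empty case is handled immediately since $\rk(\emptyset)=\nv 1$ by convention and $\Phi_{\alpha,\cbar}(\emptyset)=\emptyset$, so assume $X$ is nonempty.

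The first step is the following refinement observation: if $X\seq C$ and $D_1,\ldots,D_m$ are any cosets in $\Z^n$ covering $X$, then $X\seq\bigcup_{i=1}^m(D_i\cap C)$, and each nonempty intersection $D_i\cap C$ is again a coset of rank at most $\rk(D_i)$. Indeed, writing $D_i=\ebar_i+H_i$ for a subgroup $H_i\leq\Z^n$, any $\xbar\in D_i\cap C$ satisfies $D_i\cap C=\xbar+(H_i\cap G)$, and $\rk(H_i\cap G)\leq\rk(H_i)$ since $H_i\cap G$ is a subgroup of $H_i$. Hence when computing $\rk(X)$ we may always assume the covering cosets lie inside $C$, so that
\[
\rk(X)=\min\{k:X\text{ is covered by finitely many cosets }D\seq C\text{ with }\rk(D)\leq k\}.
\]

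Next, I would invoke Lemma \ref{lem:rectcoset}: the assignment $D\mapsto\Phi_{\alpha,\cbar}(D)$ is a rank-preserving bijection between cosets contained in $C$ and cosets in $\Z^k$. Since $\Phi_{\alpha,\cbar}\colon C\to\Z^k$ is itself a bijection of sets, $X\seq\bigcup_{i=1}^m D_i$ with each $D_i\seq C$ if and only if $\Phi_{\alpha,\cbar}(X)\seq\bigcup_{i=1}^m\Phi_{\alpha,\cbar}(D_i)$, and the corresponding cosets have identical rank. Applying the analogous refinement statement in $\Z^k$ (coverings by cosets in $\Z^k$ may be taken to be cosets ``contained in $\Z^k$'', which is automatic), the minima defining $\rk(X)$ and $\rk(\Phi_{\alpha,\cbar}(X))$ agree, yielding the equality.

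There is no real obstacle here; the only mildly nontrivial input is the algebraic fact that $D_i\cap C$ is either empty or a coset of $H_i\cap G$. After that, the corollary is a formal consequence of Lemma \ref{lem:rectcoset} together with the set-theoretic bijectivity of $\Phi_{\alpha,\cbar}$ on $C$.
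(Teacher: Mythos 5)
Your proof is correct and follows essentially the same route as the paper: reduce to coverings by cosets contained in $C$, then transport them through the rank-preserving bijection of Lemma \ref{lem:rectcoset}. The only difference is that you explicitly justify the reduction step (via $D_i\cap C$ being a coset of $H_i\cap G$ of no greater rank), which the paper asserts without proof.
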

\begin{proof}
If $X\seq C$ then $\rk(X)$ and $\rk(\Phi_{\alpha,\cbar}(X))$ are both bounded above by $k=\rk(C)$. Moreover, in the computation of $\rk(X)$ it suffices to only consider cosets of $\Z^n$ which are subsets of $C$. Given $m\leq k$ and a subset $X\seq C$, it follows from Lemma \ref{lem:rectcoset} that $\Phi_{\alpha,\cbar}(X)$ is contained in a finite union of cosets in $\Z^k$ of rank at most $m$ if and only if $X$ is contained in a finite union of cosets in $\Z^n$, which are subsets of $C$ and have rank at most $m$. By definition, $\rk(\Phi_{\alpha,\cbar}(X))=\rk(X)$.
\end{proof}

\begin{definition}
Given $0\leq k\leq n$, a subset $X\seq\Z^n$ is a \textbf{quasi-coset of rank $k$} if $X=C\backslash Z$, where $C$ is a coset of rank $k$ and $Z\seq C$ is definable with $\rk(Z)<k$.
\end{definition}

Note that any quasi-coset in $\Z^n$ is definable in $(\Z,+,0)$ by Fact \ref{fact:module}.

\begin{theorem}\label{thm:qc}
For any $n>0$, if $A\seq\Z^n$ is nonempty and definable in $(\Z,+,0)$, then $A$ can be written as a finite union of quasi-cosets in $\Z^n$.
\end{theorem}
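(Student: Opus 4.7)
The plan is to put $A$ into disjunctive normal form via Fact \ref{fact:module} and reduce the problem to a single ``cell''. Writing $A$ as a Boolean combination of cosets, we may express it as a finite union of sets of the form $(C_1\cap\cdots\cap C_p)\setminus(C'_1\cup\cdots\cup C'_q)$ with the $C_i,C'_j$ cosets in $\Z^n$. Since a finite intersection of cosets is either empty or a coset (by choosing a point $\cbar$ in the intersection and noting that the difference with $\cbar$ lies in the intersection of the underlying subgroups), each nonempty cell takes the form $D=C\setminus(E_1\cup\cdots\cup E_r)$, where $C$ is a coset of some rank $k$ and each $E_j=C\cap C'_j$ is a coset contained in $C$. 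So it suffices to show that every such $D$ is a finite union of quasi-cosets.

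Next, I would split the $E_j$ by rank. Let $Z$ be the union of those $E_j$ with $\rk(E_j)<k$; then $Z\seq C$ is definable with $\rk(Z)<k$. Let $F_1,\ldots,F_s$ be the remaining $E_j$, each of rank exactly $k$. Then $D=(C\setminus(F_1\cup\cdots\cup F_s))\setminus Z$, and it is enough to decompose $C\setminus(F_1\cup\cdots\cup F_s)$ as a finite union of rank-$k$ cosets. Indeed, if $C'$ is any such coset, then $C'\setminus Z=C'\setminus(Z\cap C')$ is a rank-$k$ quasi-coset, since $Z\cap C'$ is definable with $\rk(Z\cap C')\leq\rk(Z)<k$. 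Summing over all $C'$ (and over all DNF cells) expresses $A$ as a finite union of quasi-cosets.

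To handle $C\setminus(F_1\cup\cdots\cup F_s)$, write $C=\cbar+G$ with $G\leq\Z^n$ a rank-$k$ subgroup, and $F_i=\cbar+\bar g_i+H_i$ with $\bar g_i\in G$ and $H_i\leq G$ of rank $k$. By Corollary \ref{cor:finindex}, each $H_i$ has finite index in $G$, so $H:=H_1\cap\cdots\cap H_s$ is also of finite index in $G$. The finitely many cosets of $H$ in $G$ partition $C$ into rank-$k$ translates $\cbar+\bar g+H$, and because $H\leq H_i$, each such translate is either contained in $F_i$ or disjoint from $F_i$. Thus $C\setminus(F_1\cup\cdots\cup F_s)$ equals the union of those translates avoiding every $F_i$. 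This last step is the one non-bookkeeping piece of the argument; it is where the structure theory of $\Z^n$ (through Corollary \ref{cor:finindex}) is really used, in order to refine finitely many rank-$k$ sub-cosets of $C$ into a partition by cosets of a single common subgroup.
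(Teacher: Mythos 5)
Your proof is correct, but it takes a genuinely different route from the paper's. The paper argues by induction on $n$: it shows that the finite unions of quasi-cosets form a Boolean algebra containing all cosets, where closure under complement and intersection (Claims 2 and 3 there) rests on a key claim that any definable set of rank $<n$ is already a finite union of quasi-cosets; that claim is proved by transporting the set into $\Z^k$ for some $k<n$ via the coordinate maps $\Phi_{\alpha,\cbar}$ of Lemma \ref{lem:rectcoset} and Corollary \ref{cor:rectcoset} and invoking the induction hypothesis. You instead work directly: disjunctive normal form, the observation that a nonempty finite intersection of cosets is a coset, and then the common-refinement step, where the finitely many full-rank sub-cosets $F_i=\cbar+\bar g_i+H_i$ of $C=\cbar+G$ are simultaneously resolved by the finite-index subgroup $H=\bigcap_i H_i$ (finite index by Corollary \ref{cor:finindex} plus the standard fact that a finite intersection of finite-index subgroups has finite index), so that $C\setminus\bigcup_i F_i$ is a finite union of rank-$k$ cosets of $H$. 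Your argument exploits the fact that the definition of a quasi-coset permits removing an \emph{arbitrary} definable set of lower rank, so the low-rank part $Z$ never needs to be decomposed further; this is what lets you avoid induction on $n$ altogether. What each approach buys: yours is shorter and more self-contained for this particular theorem; the paper's buys reusable machinery (the rank-preserving rectification maps), which it needs again later in the proof of Claim \ref{claim:main}. Two small points worth making explicit if you write this up: a DNF cell with no positive literal is handled by taking $C=\Z^n$, and each $C'\setminus(Z\cap C')$ is indeed a quasi-coset because $Z\cap C'$ is a Boolean combination of cosets (hence definable by Fact \ref{fact:module}) of rank at most $\rk(Z)<k$.
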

\begin{proof}
For brevity, we call a definable subset $A\seq\Z^k$ \emph{good} if either $A=\emptyset$ or $A$ can be written as a finite union of quasi-cosets in $\Z^n$. By induction on $n\geq 0$, we prove that any definable subset in $\Z^n$ is good (where our convention is $\Z^0=\{0\}$). The base case $n=0$ is trivial. Fix $n>0$ and assume that, for any $k<n$, any definable subset $A\seq\Z^k$ is good.

\noit{Claim 1}: If $X\seq\Z^n$ is definable, with $\rk(X)<n$, then $X$ is good.

\noit{Proof}: Let $k=\rk(X)<n$. Then $X\seq C_1\cup\ldots\cup C_m$, where each $C_i$ is a coset in $\Z^n$ of rank at most $k$. If we can show that each $X\cap C_i$ is good then it will follow that $X$ is good. Therefore, without loss of generality, we assume $X$ is contained in a single coset $C$ of rank $k$. Let $C=\cbar+G$ and fix a basis $\alpha=\{\abar^1,\ldots,\abar^k\}$ for $G$. Let $\Phi=\Phi_{\alpha,\cbar}$. Then $\Phi(X)$ is a definable subset of $\Z^k$, and is therefore good by induction. So we may write
$$
\Phi(X)=E_1\backslash W_1\cup\ldots\cup E_m\backslash W_m,
$$
where $E_i$ is a coset in $\Z^k$ of rank $k_i\leq k$, and $W_i\seq E_i$ is definable with $\rk(W_i)=l_i<k_i$. Let $D_i=\Phi\inv(E_i)$ and $Z_i=\Phi\inv(W_i)$. Then
$$
X=D_1\backslash Z_1\cup\ldots\cup D_n\backslash Z_m.
$$
By Lemma \ref{lem:rectcoset}, each $D_i$ is a coset in $\Z^n$ of rank $k_i$. By Corollary \ref{cor:rectcoset}, each $Z_i$ is a definable subset of $D_i$ of rank $l_i<k_i$. Altogether, $X$ is good.\claim

\noit{Claim 2}: Any finite intersection of quasi-cosets in $\Z^n$ is good.

\noit{Proof}: It suffices to prove the claim for an intersection of two quasi-cosets $X$ and $Y$. Let $A=X\cap Y$. By Claim 1, we may assume $\rk(A)=n$. Let $X=C\backslash U$ and $Y=D\backslash V$ where $C,D$ are cosets in $\Z^n$ and $U,V$ are definable with $U\seq C$, $V\seq D$, $\rk(U)<n$, and $\rk(V)<n$. Since $A$ is nonempty, $C\cap D$ is some coset $E$ in $\Z^n$. Setting $Z=E\cap(U\cup V)$, we have $A=E\backslash Z$. Then $\rk(Z)\leq\max\{\rk(U),\rk(V)\}<n$, which means we must have $\rk(E)=n$. Altogether, $A$ is a quasi-coset of rank $n$, and is therefore good.\claim

\noit{Claim 3}: Let $X\seq\Z^n$ be a quasi-coset. Then $\neg X:=\Z^n\backslash X$ is good.

\noit{Proof}: Set $X=C\backslash Z$, where $C$ is a coset of rank $k\leq n$ and $Z\seq C$ is definable with $\rk(Z)<k$. Then $\neg X=Z\cup \Z^n\backslash C$. Note that $Z$ is good by Claim 1, and so it suffices to show that $\Z^n\backslash C$ is good. If $k<n$ then this is immediate, so we may assume $k=n$. Let $C=\cbar+G$ for some $\cbar\in\Z^n$ and rank $n$ subgroup $G\leq\Z^n$. By Corollary \ref{cor:finindex}, $G$ has finite index in $\Z^n$. So $\Z^n\backslash C$ is a finite union of cosets, and therefore good.\claim

We now proceed with the induction step. Since any coset in $\Z^n$ is good, it suffices by Fact \ref{fact:module} to show that the good sets in $\Z^n$ form a Boolean algebra. Since the union of two good sets is obviously good, it suffices to fix a good set $A\seq\Z^n$ and prove that $\neg A$ is good. Let 
$$
A=X_1\cup\ldots\cup X_n.
$$
where each $X_i$ is a quasi-coset. Then $\neg A=\neg X_1\cap\ldots\cap\neg X_m$. By Claim 3, we may write $\neg X_i=\bigcup_{j\in J_i}Y^i_j$, where $J_i$ is some finite set and each $Y^i_j$ is a quasi-coset. If $J=J_1\times\ldots\times J_m$ then $\neg A=\bigcup_{\jbar\in J}Y_{\jbar}$ where, given $\jbar\in J$,
$$
Y_{\jbar}=Y^1_{j_1}\cap\ldots\cap Y^m_{j_m}.
$$
By Claim 2, each $Y_{\jbar}$ is good. Altogether, $\neg A$ is good, as desired.
\end{proof}

\section{Polyhedra and inscribed balls}\label{sec:poly}

Throughout the section we work in $\R^n$ for a fixed $n>0$. 

\begin{definition}
$~$
\begin{enumerate}
\item A function $f\colon \R^n\to\R$ is an \textbf{affine transformation} if it is of the form
$$
f(\xbar)=u+\sum_{i=1}^n a_ix_i,
$$
for some $u,a_1,\ldots,a_n\in\R$. If $u=0$ then $f$ is \textbf{linear}. If $a_i=0$ for all $1\leq i\leq n$ then $f$ is \textbf{constant}.

\item Given a non-constant affine transformation $f\colon \R^n\to\R$, we define the associated \textbf{affine hyperplane}
$$
H(f)=\{\xbar\in\R^n:f(\xbar)=0\},
$$
as well as the associated \textbf{half-spaces}
\begin{align*}
H^+(f) &= \{\xbar\in\R^n:f(\xbar)>0\},\\
H^-(f) &= \{\xbar\in\R^n:f(\xbar)<0\},\\
\skov{H}^+(f) &= \{\xbar\in \R^n:f(\xbar)\geq 0\},\\
\skov{H}^-(f) &= \{\xbar\in\R^n:f(\xbar)\leq 0\}.
\end{align*}
For  $\bullet\in\{+,-\}$, we may also use $H^\bullet_1(f):=\skov{H}^\bullet(f)$ and $H^\bullet_0(f):=H^\bullet(f)$.

\item A \textbf{polyhedron} in $\R^n$ is an intersection of finitely many half-spaces.\footnote{We are making a slight departure from standard conventions in allowing polyhedra to be defined by possibly \emph{open} half-spaces. }

\item Given a non-constant linear function $f(\xbar)=\sum_{i=1}^n a_ix_i$, and $u<v$ in $\R$, we define the \textbf{plank}
$$
S(f,u,v)=\{\xbar\in\R^n:u\leq f(\xbar)\leq v\}=\skov{H}^+(f-u)\cap\skov{H}^-(f-v).
$$
The \textbf{thickness} of the plank $S(f,u,v)$ is $\frac{u-v}{|\abar|}$, i.e., the distance between the hyperplanes $H(f-u)$ and $H(f-v)$.

\item Given $\xbar\in\R^n$ and $r\geq 0$, we define the \textbf{closed ball}
$$
B_r(\xbar)=\{\ybar\in\R^n:d(\xbar,\ybar)\leq r\}.
$$

\item The \textbf{inradius} of a convex set $P\seq\R^n$ is
$$
r(P)=\sup\{r\geq 0:B_r(\xbar)\seq P\text{ for some }\xbar\in\R^n\}\in\R^{\geq0}\cup\{\infty\}.
$$
\end{enumerate}
\end{definition}

For example, if $B$ is a ball of radius $r$ then $r(B)=r$. If $S$ is a plank of thickness $t$ then $r(S)=\frac{t}{2}$. Moreover, if $P\seq\R^n$ is convex then $r(P)=0$ if and only if $P$ has no interior. In particular, any hyperplane, or more generally, any affine translation of a proper vector subspace of $\R^n$, is a closed polyhedron with inradius $0$.

In this section, we derive some consequences, to be used later, of the following result on convex sets and inscribed balls.

\begin{fact}\label{fact:kad}\textnormal{(Kadets \cite{Kad} 2005)} 
Suppose $Q,P_1,\ldots,P_k$ are closed convex sets in $\R^n$, with nonempty interior. If $Q\seq P_1\cup\ldots\cup P_k$ then $r(Q)\leq\sum_{i=1}^k r(P_i)$.
\end{fact}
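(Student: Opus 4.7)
The plan is to reduce to the case where $Q$ is a closed Euclidean ball and then induct on the number $k$ of covering convex sets. For the reduction, since $r(Q)$ is defined as a supremum, for every $\epsilon > 0$ the convex set $Q$ contains a closed ball $B$ of radius at least $r(Q) - \epsilon$, and $B \subseteq Q \subseteq P_1 \cup \cdots \cup P_k$; so it suffices to establish the bound with $B$ replacing $Q$ and then let $\epsilon \to 0$. (If $r(Q) = \infty$, apply the bounded case to balls of arbitrarily large radius.) So one may assume $Q = B_r(\bar{x})$ throughout.

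For the induction, the base case $k = 1$ is immediate: $B \subseteq P_1$ forces $r(P_1) \geq r$ directly from the definition of inradius. For the inductive step, given a closed ball $B$ of radius $r$ with $B \subseteq P_1 \cup \cdots \cup P_k$, the goal is to exhibit a closed ball $B^\ast$ of radius at least $r - r(P_k)$ with $B^\ast \subseteq P_1 \cup \cdots \cup P_{k-1}$; the induction hypothesis applied to $B^\ast$ would then yield $r - r(P_k) \leq \sum_{i=1}^{k-1} r(P_i)$, completing the step. If $r \leq r(P_k)$ there is nothing to prove, so assume $r > r(P_k)$.

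The ball $B^\ast$ should arise by translating $B$ away from $P_k$ while shrinking its radius by $r_k := r(P_k)$. The key geometric observation is that since $P_k$ is closed convex with inradius $r_k$, every point $\bar{y} \in P_k$ satisfies $d(\bar{y}, \R^n \setminus P_k) \leq r_k$ (otherwise a ball of radius $d(\bar{y}, \R^n \setminus P_k) > r_k$ would fit inside $P_k$, contradicting the definition of inradius). Thus $P_k$ sits in the closed $r_k$-neighborhood of $\partial P_k$. Concretely, I would pick a supporting hyperplane $H$ of $P_k$ meeting $\partial P_k$ at a point near $B$, translate $B$ by $r_k$ in the direction of the outward unit normal to $H$, and simultaneously shrink to radius $r - r_k$. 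The resulting smaller ball $B^\ast$ should then be contained in $B$ (since we translate by at most $r_k$ and shrink by $r_k$) and disjoint from $P_k$ (since $P_k$ lies within distance $r_k$ of $H$ on the inward side), yielding $B^\ast \subseteq B \setminus P_k \subseteq P_1 \cup \cdots \cup P_{k-1}$.

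The main obstacle is making this translation step rigorous when $P_k \cap B$ is not cleanly confined to one side of $B$: if $\partial P_k$ is curved inside $B$, or if $P_k \cap B$ is concentrated near multiple boundary points of $B$, then no single translation direction obviously pushes all of $P_k \cap B$ out of the shifted ball. Overcoming this is precisely the subtle geometric content of Kadets' theorem, and a complete proof presumably requires either an extremal/compactness argument selecting an optimal translation direction (possibly depending on an inscribed ball of $P_k$ rather than on $\partial P_k$ pointwise), or an argument that acts on all the $P_i$ simultaneously rather than peeling them off one at a time. This is the crux of the result and the main reason the theorem is nontrivial beyond the classical plank theorem of Bang; for the application in this paper one would simply cite the statement of Fact~\ref{fact:kad} as given.
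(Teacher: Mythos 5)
The paper offers no proof of this statement: it is imported verbatim as a theorem of Kadets, just as Tarski's and Bang's special cases are cited rather than proved. So the conclusion of your final paragraph --- that one simply cites the fact --- is exactly what the paper does, and nothing more is required for the application.

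Read as a proof attempt, however, your sketch has a genuine gap, and the specific inductive route you propose cannot be repaired: the lemma it rests on is false, not merely unproved. You ask for a closed ball $B^\ast$ of radius $r - r(P_k)$ contained in $B \setminus P_k$. Take $B = B_r(0)$ in $\mathbb{R}^n$ and let $P_k = \{x : |x_1| \le \rho\}$ be a plank through the center, so $r(P_k)=\rho$. A ball of radius $s$ contained in $B$ and in the half-space $x_1 \ge \rho$ must have its center at some distance $d$ from the origin with $d + s \le r$ and $s \le d - \rho$, forcing $s \le (r-\rho)/2$, which is strictly smaller than $r - \rho$ whenever $r > \rho$; so the largest ball in $B\setminus P_k$ has only about half the radius you need. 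Your concrete construction fails for the same reason: translating $B$ by $\rho$ along the outward normal of the supporting hyperplane $x_1=\rho$ and shrinking to radius $r-\rho$ yields a ball containing points with $x_1$ as small as $2\rho - r$, so it still meets (indeed crosses) the plank whenever $r > \rho$. This factor-of-two loss at each peeling step is precisely why the naive induction only yields a much weaker bound and why Bang's plank theorem and Kadets' generalization are nontrivial; the known proofs proceed by a genuinely different (variational/extremal) argument applied to the whole cover at once, not by removing one $P_i$ at a time. Since the paper treats the fact as a black box, none of this affects the main results, but the inductive step as you formulate it should be flagged as false rather than as a technicality to be smoothed over.
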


This fact has its roots in a 1932 result of Tarski \cite{Tar32}, which essentially deals with the case in $\R^2$ where the $P_i$ are parallel planks. The generalization of Tarski's result to $\R^n$ was given by Bang \cite{Bang} in 1951. 

The following is an immediate consequence of Fact \ref{fact:kad}.

\begin{corollary}\label{cor:kad}
Suppose $Q,P_1,\ldots,P_k$ are polyhedra in $\R^n$ and $Q\seq P_1\cup\ldots\cup P_k$. If $r(Q)=\infty$ then $r(P_i)=\infty$ for some $1\leq i\leq k$.
\end{corollary}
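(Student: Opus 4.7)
The plan is to derive a contradiction from assuming $r(P_i)<\infty$ for every $i$ while $r(Q)=\infty$. Under this assumption, set $M=\sum_{i=1}^k r(P_i)$, which is finite, and use $r(Q)=\infty$ to produce some $\xbar\in\R^n$ with $B:=B_{M+1}(\xbar)\seq Q\seq P_1\cup\ldots\cup P_k$. The goal is then to apply Fact \ref{fact:kad} to $B$ and the $P_i$ to conclude $r(B)\leq M$, contradicting $r(B)=M+1$.

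The only obstacle is that Fact \ref{fact:kad} requires \emph{closed} convex sets with \emph{nonempty interior}, whereas the paper's polyhedra permit open half-spaces (see the footnote to the definition), and some $P_i$ may have empty interior (equivalently $r(P_i)=0$). I would handle both issues simultaneously as follows. Split the indices into $I_0=\{i:P_i\text{ has nonempty interior}\}$ and $I_1=\{i:P_i\text{ has empty interior}\}$. For $i\in I_1$, $P_i$ is contained in a proper affine subspace of $\R^n$, hence in a hyperplane, so $\bigcup_{i\in I_1}P_i$ is nowhere dense in $\R^n$; therefore $B\setminus\bigcup_{i\in I_1}P_i$ is open and dense in $B$ and contained in $\bigcup_{i\in I_0}P_i$, which upon taking closures gives $B\seq\bigcup_{i\in I_0}\overline{P_i}$. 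For $i\in I_0$, the closure $\overline{P_i}$ is obtained by replacing each open half-space in the defining intersection by its closure, so $\overline{P_i}$ is a closed polyhedron in the standard sense, with nonempty interior. A brief argument (using that for a convex set with nonempty interior the interior of the closure coincides with the interior of the set itself, so any closed ball inside $\overline{P_i}$ has open ball interior inside $P_i$) shows $r(\overline{P_i})=r(P_i)$.

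With these preparations, Fact \ref{fact:kad} applied to the closed ball $B$ and the closed polyhedra $\{\overline{P_i}:i\in I_0\}$ yields
$$
M+1 = r(B) \leq \sum_{i\in I_0}r(\overline{P_i}) = \sum_{i\in I_0}r(P_i) \leq M,
$$
the desired contradiction. The main difficulty here is not conceptual but purely bookkeeping around the nonstandard definition of polyhedron and the degenerate empty-interior case; no new geometric ideas are needed beyond Fact \ref{fact:kad}.
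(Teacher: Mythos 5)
Your proof is correct and is precisely the argument the paper intends when it calls the corollary an ``immediate consequence'' of Fact \ref{fact:kad}: assume all $r(P_i)$ are finite, inscribe a ball in $Q$ of radius exceeding $\sum_i r(P_i)$, and contradict Kadets' inequality. Your extra bookkeeping --- passing to closures to handle the open half-spaces, discarding the empty-interior $P_i$ via a density argument, and checking $r(\overline{P_i})=r(P_i)$ --- correctly fills in the details the paper leaves implicit.
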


\begin{remark}
Although we quote Corollary \ref{cor:kad} as a consequence of Kadets' result, it is worth observing that we are not using the full power of Fact \ref{fact:kad}, but only the difference between finite and infinite inradius. We leave it as an exercise to the reader to deduce Corollary \ref{cor:kad} directly from basic principles (such an argument was communicated to us by Solecki).
\end{remark}

For later results, we need two applications of Corollary \ref{cor:kad}. First, we show that if $P$ is a polyhedron of infinite inradius, defined by some finite intersection of half-spaces, then the ``opposite'' polyhedron, defined by the intersection of the opposite sides of these half-spaces, also has infinite inradius. 

\begin{lemma}\label{lem:polyinv}
Suppose $f_1,\ldots,f_k$ are linear functions from $\R^n$ to $\R$ and $b_1,\ldots,b_k\in\R$. Fix $\eta\colon \{1,\ldots,k\}\to \{0,1\}$ and define
$$
P^+=\bigcap_{i=1}^kH^+_{\eta(i)}(f_i-b_i)\mand P^-=\bigcap_{i=1}^k H^-_{\eta(i)}(f_i-b_i).
$$
Then $r(P^+)=\infty$ if and only if $r(P^-)=\infty$.
\end{lemma}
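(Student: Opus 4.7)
The plan is to prove the lemma by explicit construction: from a sequence of arbitrarily large balls in $P^+$, produce a corresponding sequence in $P^-$ via the central reflection $\xbar \mapsto -\xbar$. The two directions of the biconditional are symmetric, since replacing $f_i$ by $-f_i$ and $b_i$ by $-b_i$ swaps $H^+_{\eta(i)}(f_i - b_i)$ with $H^-_{\eta(i)}(-f_i+b_i)$ while preserving the form of the defining data; so it suffices to prove $r(P^+) = \infty \Rightarrow r(P^-) = \infty$.

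The key translation is algebraic. Writing $\abar_i$ for the gradient vector of $f_i$, the minimum and maximum of $f_i$ over the closed ball $B_r(\xbar)$ are $f_i(\xbar) \mp r|\abar_i|$. Because the half-space notation $H^\bullet(\cdot)$ in the paper is defined only for non-constant affine maps, $|\abar_i| > 0$. Hence $B_r(\xbar) \subseteq H^+_{\eta(i)}(f_i - b_i)$ is equivalent to $f_i(\xbar) \geq b_i + r|\abar_i|$ (strictly if $\eta(i) = 0$), and similarly for $H^-_{\eta(i)}$.

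Assuming $r(P^+) = \infty$, I would choose balls $B_{r_n}(\xbar_n) \subseteq P^+$ with $r_n \to \infty$, obtaining $f_i(\xbar_n) \geq b_i + r_n|\abar_i|$ for every $i$. Since $f_i$ is linear, the reflected points $\ybar_n := -\xbar_n$ satisfy $f_i(\ybar_n) \leq -b_i - r_n|\abar_i|$. Setting $C = \max_i 2|b_i|/|\abar_i|$, a constant independent of $n$, and $r'_n = r_n - C$, a direct calculation shows that for every $\zbar \in B_{r'_n}(\ybar_n)$, $f_i(\zbar) \leq f_i(\ybar_n) + r'_n|\abar_i| \leq -b_i - C|\abar_i| \leq b_i$, where the last inequality follows from $C|\abar_i| \geq 2|b_i| \geq -2b_i$. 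Thus $B_{r'_n}(\ybar_n) \subseteq P^-$, and since $r'_n \to \infty$ we conclude $r(P^-) = \infty$.

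The main (but minor) obstacle is bookkeeping open vs.\ closed half-spaces as encoded by $\eta$. When $\eta(i) = 0$, the initial containment and the estimate $f_i(\xbar_n) > b_i + r_n|\abar_i|$ are strict, and this strictness propagates through the middle step of the chain above to give $f_i(\zbar) < b_i$, as required to land inside $H^-(f_i - b_i)$. A secondary concern is that $r'_n$ be nonnegative, which is automatic for large $n$ since $C$ is fixed while $r_n \to \infty$.
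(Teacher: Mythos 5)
Your proof is correct, but it takes a genuinely different route from the paper's. The paper also begins with the reflection $\xbar\mapsto\nv\xbar$, which carries $P^+$ isometrically onto $Q=\bigcap_{i=1}^kH^-_{\eta(i)}(f_i+b_i)$, so $r(Q)=\infty$; but to pass from $Q$ (constraints $f_i\leq\nv b_i$) to $P^-$ (constraints $f_i\leq b_i$) it covers the discrepancy by planks $S_i=S(f_i,\nv b_i,b_i)$ of finite thickness, writes $Q\seq P^-\cup S$ with $S$ a finite union of finite-inradius polyhedra, and invokes Corollary \ref{cor:kad} (Kadets). You instead absorb the discrepancy quantitatively: after reflecting the centers of large balls inscribed in $P^+$, you shrink the radius by the fixed constant $C=\max_i 2|b_i|/|\abar_i|$ so that the shifted constraints $f_i(\zbar)\leq b_i$ (strict when $\eta(i)=0$) are met directly; this uses only the elementary identity $\max_{B_r(\xbar)}f_i=f_i(\xbar)+r|\abar_i|$. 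Your argument is entirely self-contained, avoids Kadets' theorem for this lemma, and in fact yields the sharper statement $r(P^-)\geq r(P^+)-C$; the paper's version is shorter to write given that the plank-covering machinery is already in place for Lemma \ref{lem:polyint}. Your reduction of the converse direction (replace $f_i,b_i$ by $\nv f_i,\nv b_i$, which swaps the roles of $P^+$ and $P^-$) and your handling of the open versus closed cases via $\eta$ are both sound.
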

\begin{proof}
Without loss of generality, it suffices to assume $r(P^+)=\infty$ and prove $r(P^-)=\infty$. Define
$$
Q=\bigcap_{i=1}^kH^-_{\eta(i)}(f_i+b_i).
$$
For any $\xbar\in\R^n$, $\xbar\in P^+$ if and only if $\nv\xbar\in Q$. Since $\xbar\mapsto\nv\xbar$ is an isometry of $\R^n$, it follows that $r(Q)=r(P^+)=\infty$. 

Given $1\leq i\leq k$, define
$$
S_i=\begin{cases}
S(f_i,\nv b_i,b_i)\backslash H(f-b_i) & \text{if $b_i\geq 0$ and $\eta(i)=0$}\\
S(f_i,\nv b_i,b_i) & \text{if $b_i\geq 0$ and $\eta(i)=1$}\\
S(f_i,b_i,\nv b_i) & \text{if $b_i<0$ and $\eta(i)=0$}\\
S(f_i,b_i,\nv b_i)\backslash H(f_i-b_i) & \text{if $b_i<0$ and $\eta(i)=1$}.
\end{cases}
$$
Then, for any $1\leq i\leq k$, we have
$$
H_{\eta(i)}^-(f_i-b_i)=\begin{cases}
H_{\eta(i)}^-(f_i+b_i)\cup S_i & \text{if $b_i\geq 0$}\\
H_{\eta(i)}^-(f_i+b_i)\backslash S_i & \text{if $b_i<0$}.
\end{cases}
$$
Therefore, we may write
$$
P^-=A\cup\bigcap_{b_i\geq 0}H_{\eta(i)}^-(f_i+b_i)\cup\bigcap_{b_i<0}H_{\eta(i)}^-(f_i+b_i)\backslash S,
$$
where $A$ is some subset of $\R^n$ and $S=\bigcup_{b_i<0}S_i$. In particular, $Q\seq P^-\cup S$. Since $S$ is a finite union of polyhedra, each of finite inradius, and $r(Q)=\infty$, it follows from Corollary \ref{cor:kad} that $r(P^-)=\infty$.
\end{proof}

Finally, we show that the integer points on a polyhedron of infinite inradius cannot be covered by finitely many polyhedra of finite inradii, even after removing some definable small-rank subset of the integer points.

\begin{lemma}\label{lem:polyint}
Suppose $X\seq\Z^n$ is definable in $(\Z,+,0)$, with $\rk(X)<n$. Let $P\seq\R^n$ be a a polyhedron, with $r(P)=\infty$. Suppose $Q_1,\ldots,Q_k$ are polyhedra in $\R^n$ such that $P\cap(\Z^n\backslash X)\seq Q_1\cup\ldots\cup Q_k$. Then $r(Q_i)=\infty$ for some $1\leq i\leq k$.
\end{lemma}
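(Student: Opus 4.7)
The plan is to argue by contradiction. Assume that $r(Q_i)<\infty$ for every $i\leq k$, and fix $\epsilon>\sqrt{n}/2$, so that every closed ball of radius $\epsilon$ in $\R^n$ contains a point of $\Z^n$.

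First, I would unpack the hypothesis $\rk(X)<n$ geometrically. By Fact~\ref{fact:module}, $X$ is contained in a finite union of cosets of subgroups of $\Z^n$ of rank less than $n$, and each such coset sits inside an affine subspace of $\R^n$ of dimension less than $n$. Fix such affine subspaces $H_1,\ldots,H_m\seq\R^n$ with $X\seq H_1\cup\ldots\cup H_m$.

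Next, I would thicken the cover and erode $P$. Let $\widetilde{Q_i}$ and $\widetilde{H_j}$ be the closed $\epsilon$-neighborhoods of $\overline{Q_i}$ and $H_j$, respectively. Each is a closed convex set with nonempty interior, and each has finite inradius: the width of $\widetilde{Q_i}$ in any direction where $\overline{Q_i}$ has finite width is again finite, and $\widetilde{H_j}$ has width at most $2\epsilon$ in any direction transverse to $H_j$. On the other side, let $P^\flat=\{\ybar\in\R^n:B_\epsilon(\ybar)\seq P\}$ be the $\epsilon$-erosion of $P$. Erosion of a polyhedron (defined by $f_i\leq b_i$, resp.\ $f_i<b_i$) is again a polyhedron (defined by $f_i\leq b_i-\epsilon\|\nabla f_i\|$, resp.\ the strict version), and $r(P^\flat)=\infty$ because any closed ball $B_R(\xbar)\seq P$ contains the concentric ball $B_{R-\epsilon}(\xbar)\seq P^\flat$.

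The heart of the argument is the containment
\[
P^\flat\ \seq\ \widetilde{Q_1}\cup\ldots\cup\widetilde{Q_k}\ \cup\ \widetilde{H_1}\cup\ldots\cup\widetilde{H_m}.
\]
To prove this, suppose $\ybar\in P^\flat$ is outside the right-hand side. Then $B_\epsilon(\ybar)\seq P$ and $B_\epsilon(\ybar)$ is disjoint from every $\overline{Q_i}$ and every $H_j$. By the choice of $\epsilon$, pick $\zbar\in B_\epsilon(\ybar)\cap\Z^n$. Then $\zbar\in P$ and $\zbar\notin H_1\cup\ldots\cup H_m\supseteq X$, so $\zbar\in P\cap(\Z^n\backslash X)\seq\bigcup_i Q_i\seq\bigcup_i\overline{Q_i}$, contradicting disjointness. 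Taking closures, $\overline{P^\flat}$ is still covered by these finitely many closed convex sets, and Fact~\ref{fact:kad} yields
\[
\infty=r(\overline{P^\flat})\leq\sum_{i=1}^k r(\widetilde{Q_i})+\sum_{j=1}^m r(\widetilde{H_j})<\infty,
\]
the desired contradiction.

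The main obstacle is precisely this covering claim: one must promote a discrete hypothesis (about integer points of $P\backslash X$) into a continuous one (a cover of a polyhedron of infinite inradius) before Kadets' theorem can be deployed. The erode--thicken construction with $\epsilon>\sqrt{n}/2$ is engineered exactly for this: every alleged witness $\ybar\in P^\flat$ to a failure of the covering generates an integer witness $\zbar$ to a failure of the hypothesis, because the threshold $\sqrt{n}/2$ is the smallest radius guaranteeing that a closed ball in $\R^n$ meets $\Z^n$.
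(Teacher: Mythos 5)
Your proof is correct, and it reaches Kadets' inequality by a genuinely different route than the paper's. Both arguments begin identically: the hypothesis $\rk(X)<n$ is unpacked into a cover of $X$ by finitely many affine subspaces of $\R^n$ of dimension less than $n$ (your $H_j$, the paper's $R_j$), each a convex set of inradius $0$. The divergence is in how the discrete covering hypothesis is promoted to a continuous one. The paper stays inside the Boolean algebra of finite unions of polyhedra: it writes $P\backslash(Q_1\cup\ldots\cup Q_k\cup R_1\cup\ldots\cup R_l)$ as a finite union of polyhedra $S_1,\ldots,S_m$, applies Corollary \ref{cor:kad} to the resulting cover of $P$ itself, and then rules out $r(S_i)=\infty$ by noting that a polyhedron of infinite inradius contains an integer point of $P$, violating the hypothesis. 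You instead erode $P$ and thicken the covering sets by $\epsilon>\sqrt{n}/2$ so that the thickened sets genuinely cover $\overline{P^\flat}$; the lattice-point argument is the same, but you use it to establish the cover rather than to eliminate a leftover piece. Your version avoids relying on the (asserted but not proved) closure of finite unions of polyhedra under complementation, and it applies verbatim to arbitrary convex $Q_i$; the price is that you must control inradii under Minkowski sums and erosions.

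On that price, one justification is too thin: you assert $r(\widetilde{Q_i})<\infty$ because ``the width of $\widetilde{Q_i}$ in any direction where $\overline{Q_i}$ has finite width is again finite,'' but the hypothesis $r(Q_i)<\infty$ does not obviously yield a direction of finite width (that implication is Steinhagen's inequality, which you should not need to invoke). The clean statement is $r(C+B_\epsilon(\bar{0}))\leq r(C)+\epsilon$ for closed convex $C$: if $B_s(\xbar)\subseteq C+B_\epsilon(\bar{0})$ and some $\ybar\in B_{s-\epsilon}(\xbar)$ lay outside $C$, take a unit vector $\ubar$ and $c\in\R$ with $\langle\ubar,\zbar\rangle\leq c$ on $C$ and $\langle\ubar,\ybar\rangle>c$; then $\langle\ubar,\xbar\rangle+s=\langle\ubar,\xbar+s\ubar\rangle\leq c+\epsilon$, while $\langle\ubar,\ybar\rangle\leq\langle\ubar,\xbar\rangle+(s-\epsilon)\leq c$, a contradiction. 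Hence $B_{s-\epsilon}(\xbar)\seq C$ and $r(\widetilde{Q_i})\leq r(Q_i)+\epsilon<\infty$. With that substitution the argument is complete. (Applying Fact~\ref{fact:kad} with $r(\overline{P^\flat})=\infty$ is harmless: apply it to arbitrarily large closed balls inside $\overline{P^\flat}$, exactly as in Corollary~\ref{cor:kad}; and the containment $X\seq H_1\cup\ldots\cup H_m$ comes from the definition of rank rather than from Fact~\ref{fact:module}.)
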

\begin{proof}
By definition of rank, there are cosets $C_1,\ldots,C_l\seq\Z^n$ such that $X\seq C_1\cup\ldots\cup C_l$ and, for all $1\leq i\leq l$, $\rk(C_i)=n_i<n$. For $1\leq i\leq l$, let $C_i=\cbar^i+G_i$ for some element $\cbar^i\in\Z^n$ and subgroup $G_i\leq \Z^n$ of rank $n_i$. Let $V_{G_i}$ be the convex closure of $G_i$ in $\R^n$, and note that $V_{G_i}$ is an $n_i$-dimensional subspace of $\R^n$. Set $R_i=\cbar+V_{G_i}$. Then $R_i$ is a polyhedron with $r(R_i)=0$ and $C_i\seq R_i$. Altogether,
\begin{equation*}
P\cap\Z^n\seq Q_1\cup\ldots\cup Q_k\cup R_1\cup\ldots\cup R_l.\tag{$\dagger$}
\end{equation*}

Using an inductive argument, it is straightforward to show that the collection of subsets of $\R^n$, which can be written as a finite union of polyhedra, is a Boolean algebra. Therefore, we may write
\begin{equation*}
P\backslash(Q_1\cup\ldots\cup Q_k\cup R_1\cup\ldots\cup R_l)=S_1\cup\ldots\cup S_m,\tag{$\dagger\dagger$}
\end{equation*}
for some polyhedra $S_1,\ldots,S_m$. Note that $S_i\seq P$ for all $1\leq i\leq m$, and we have
$$
P\seq Q_1\cup\ldots\cup Q_k\cup R_1\cup\ldots\cup R_l\cup S_1\cup\ldots\cup S_m.
$$
By Corollary \ref{cor:kad}, and since $r(R_i)=0$ for all $1\leq i\leq l$, it follows that either $r(Q_i)=\infty$ for some $1\leq i\leq k$ or $r(S_i)=\infty$ for some $1\leq i\leq m$. Suppose, toward a contradiction, that $r(S_i)=\infty$ for some $1\leq i\leq m$. Then we may find a point $\xbar\in S_i\cap\Z^n$. But then $\xbar\in S_i\cap(P\cap\Z^n)$, which contradicts $(\dagger)$ and $(\dagger\dagger)$.
\end{proof}

\section{Cell decomposition for Presburger sets}\label{sec:CD}

In this section, we briefly summarize the cell decomposition of sets definable in $(\Z,+,<,0)$, which is a result of Cluckers \cite{Cluck}. Recall that $\Zbar=\Z\cup\{\nv\infty,\infty\}$.

\begin{definition}
Fix $n>0$.
\begin{enumerate}
\item A function $f\colon \Z^n\to\Zbar$ is \textbf{extreme} if $f(\xbar)=\infty$ for all $\xbar\in\Z^n$ or $f(\xbar)=\nv\infty$ for all $\xbar\in\Z^n$. If $f$ is extreme and $c\in\Z$ then, by convention, we let $f+c=f$.
\item A partial function $f\colon \Z^n\to\Z$ is \textbf{standard $\Z$-linear} if there are $\mbar,\cbar\in\N^n$ such that $c_i<m_i$ for all $1\leq i\leq n$, $\dom(f)=\cbar+\mbar\Z^n$, and
$$
f(\xbar)=u+\sum_{i=1}^na_i\left(\frac{x_i-c_i}{m_i}\right),
$$
for some $u\in\Z$ and $\abar\in\Z^n$. 
\item A partial function $f\colon \Z^n\to\Zbar$ is \textbf{$\Z$-linear} if it is either extreme or standard $\Z$-linear. 
\end{enumerate}
\end{definition}

\begin{definition}\label{def:cells}
Given $\eta\in 2^{<\omega}\backslash\{\emptyset\}$, we define, by induction on $|\eta|$, the notion of an \textbf{$\eta$-cell} in $\Z^{|\eta|}$.
\begin{enumerate}
\item A \textbf{$(0)$-cell} is a singleton $\{x\}$ for some $x\in\Z$. 
\item A \textbf{$(1)$-cell} is an infinite set of the form 
$$
[a,b]^c_m:=\{x\in\Z:a\leq x\leq b,~x\equiv_m c\}.
$$
where $a,b\in\Zbar$ and $m,c\in\N$ with $c<m$.
\item Given $\eta\in 2^{<\omega}\backslash\{\emptyset\}$, an \textbf{$(\eta,0)$-cell} is a set of the form
$$
X[f]:=\{(\xbar,y)\in\Z^{|\eta|+1}:\xbar\in X,f(\xbar)=y\},
$$
where $X\seq\Z^{|\eta|}$ is an $\eta$-cell and $f$ is a standard $\Z$-linear function with $X\seq\dom(f)$. 
\item Given $\eta\in 2^{<\omega}\backslash\{\emptyset\}$, an \textbf{$(\eta,1)$-cell} is a set of the form
$$
X[f,g]^c_m:=\{(\xbar,y)\in\Z^{|\eta|+1}:\xbar\in X,~y\in[f(\xbar),g(\xbar)]^c_m\},
$$
where $X\seq\Z^{|\eta|}$ is an $\eta$-cell, $f,g$ are $\Z$-linear functions such that $X\seq\dom(f)\cap\dom(g)$, $f(\xbar)\leq g(\xbar)$ for all $\xbar\in X$, $c,m\in\N$ with $c<m$, and there is no uniform finite bound on the sets $[f(\xbar),g(\xbar)]^c_m$ for $\xbar\in X$.
\end{enumerate}
\end{definition}

The following is (a slightly weaker version of) Cluckers' cell decomposition \cite{Cluck}.

\begin{fact}\label{fact:CD}
Any Presburger set in $\Z^n$ can be written as a finite union of cells in $\Z^n$.
\end{fact}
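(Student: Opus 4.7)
The plan is to prove Fact \ref{fact:CD} by induction on $n$, using quantifier elimination for $(\Z,+,<,0)$ in the language with constant $1$, subtraction $-$, and predicates $\equiv_m$ for all $m>0$. The base case $n=1$ is already covered by Proposition \ref{prop:1dim}: any Presburger subset of $\Z$ differs from a finite union of arithmetic progressions by a finite set, and one checks directly that such a set is a finite union of $(0)$-cells and $(1)$-cells of the form $[a,b]^c_m$.

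For the inductive step, I would fix $A\subseteq\Z^{n+1}$ Presburger. By quantifier elimination, $A$ is a Boolean combination of atomic formulas each of the form $t(\xbar,y)\geq 0$, $t(\xbar,y)=0$, or $t(\xbar,y)\equiv_m c$, where $t$ is a $\Z$-linear term. Separating the variable $y$, each such term has the shape $t(\xbar,y)=\alpha y-s(\xbar)$ for some nonzero $\alpha\in\Z$ and $\Z$-linear $s$; the congruence atoms similarly reduce to conditions of the form $\alpha y\equiv_m s(\xbar)$. After picking a common multiple $M$ of all the $\alpha$'s and all the congruence moduli appearing, I would partition $\Z^n$ into the congruence classes $\cbar+M\Z^n$ (still Presburger) so that on each class all the divisibility and congruence conditions on $y$ become conditions of the form $y\equiv_{M'}c'$ for fixed $c',M'$, and all the inequalities become $y\geq s'(\xbar)$ or $y\leq s'(\xbar)$ where each $s'$ is now standard $\Z$-linear on the chosen class.

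Next, on a fixed such congruence class $X_0\subseteq\Z^n$, the fiber $A_{\xbar}=\{y:(\xbar,y)\in A\}$ is determined by finitely many upper bound functions $g_1,\ldots,g_p:\Z^n\to\Z$ and lower bound functions $f_1,\ldots,f_q:\Z^n\to\Z$, all standard $\Z$-linear on $X_0$, together with fixed residue data. To make the description uniform, I would further partition $X_0$ according to the order of the values $f_i(\xbar),g_j(\xbar)$: for each pair $(i,j)$ the set $\{\xbar:f_i(\xbar)\leq g_j(\xbar)\}$ is Presburger in $\Z^n$, and intersecting over all pairs yields finitely many Presburger pieces on which the linear ordering of all the bounding values is constant. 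Applying the induction hypothesis to each piece produces a finite cover by $\eta$-cells $X\subseteq\Z^n$ on which a single lower bound $f$ and a single upper bound $g$ (among the $f_i,g_j$, or $\pm\infty$) are pointwise extremal; after refining once more so that $X\subseteq \dom(f)\cap\dom(g)$, the set $A\cap(X\times\Z)$ is exactly $X[f,g]^{c'}_{M'}$ or $X[f]$ according to whether $f=g$ or not, which is an $(\eta,0)$- or $(\eta,1)$-cell. Discarding the finitely many $\xbar$ for which the fiber is nonempty but bounded and handling them as lower-dimensional cells (via the $(\eta,0)$-case applied to $\xbar\mapsto$ each of its finitely many elements) completes the decomposition.

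The main obstacle I anticipate is the bookkeeping needed to guarantee that the bounding functions produced are genuinely \emph{standard $\Z$-linear} in the strict sense of Definition \ref{def:cells}, i.e., take integer values on the integer points of their domain with a specific denominator pattern $(c_i,m_i)$. The natural formulas that arise from quantifier elimination involve division by $\alpha$, so one must absorb these denominators into the congruence description of the cell $X$ itself. This is exactly why refining by cosets $\cbar+M\Z^n$ (and then doing it again inside the inductive step) is essential: the congruence data of the cell carries the information that makes the otherwise rational affine expressions into honest $\Z$-linear functions on the cell. A secondary technicality is the ``no uniform finite bound'' clause in the definition of an $(\eta,1)$-cell, which is handled by splitting any would-be $(\eta,1)$-cell $X[f,g]^c_m$ on which $g(\xbar)-f(\xbar)$ is uniformly bounded into finitely many $(\eta,0)$-cells, one for each possible value of $y-f(\xbar)$.
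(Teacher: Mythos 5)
The paper offers no proof of this statement: it is quoted as a (weakened) form of Cluckers' cell decomposition theorem and used as a black box, so there is nothing internal to compare your argument against. What you have written is, in outline, the standard proof of that theorem (and essentially the route Cluckers himself takes): induction on the number of variables, quantifier elimination in the language with $1$, $-$, and the relations $\equiv_m$, refinement of the base $\Z^n$ by congruence classes so that the rational bounds $s(\xbar)/\alpha$ arising from atoms $\alpha y\geq s(\xbar)$ become honest standard $\Z$-linear functions, and a further refinement by the order type of the finitely many bounding values so that the inductive hypothesis can be applied to each piece. You have also correctly identified the two real technical pressure points: absorbing the denominators $\alpha$ into the congruence data of the cell, and the ``no uniform finite bound'' clause, which forces uniformly thin would-be $(\eta,1)$-cells to be re-expressed as finitely many $(\eta,0)$-cells.

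Two places where the sketch is looser than it should be. First, a Boolean combination of atoms does not directly present the fiber $A_{\xbar}$ as a single interval $[f(\xbar),g(\xbar)]$ with a congruence condition; you need to pass to disjunctive normal form first, so that each disjunct contributes one interval-with-congruence and $A\cap(X\times\Z)$ is a finite \emph{union} of cells over each piece $X$ --- which is all Fact \ref{fact:CD} asks for, so this is harmless once said explicitly. Second, the closing sentence about ``the finitely many $\xbar$ for which the fiber is nonempty but bounded'' is false as stated (that set of $\xbar$ is typically infinite), but it is also redundant: once the ordering of the bounds is constant on a piece, either $g-f$ is uniformly bounded there, in which case your $(\eta,0)$-splitting applies, or it is not, in which case the piece is a legitimate $(\eta,1)$-cell, since the definition only requires the fibers to have unbounded cardinality over the cell, not that each individual fiber be infinite. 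Finally, justifying the base case via Proposition \ref{prop:1dim} is slightly awkward, since that proposition's proof is itself omitted in the paper and attributed to the same circle of ideas; it is cleaner to prove the $n=1$ case of the cell decomposition directly from one-variable quantifier elimination.
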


The distinction between $(\eta,0)$-cells and $(\eta,1)$-cells will not be essential for our work. In particular, if $X[f]$ is an $(\eta,0)$-cell then, since $X[f]=X[f,f]^0_1$, we can view $X[f]$ as a ``flat'' $(\eta,1)$-cell. Precisely, all we will need from cell decomposition is the following corollary.

\begin{corollary}\label{cor:CD}
Given $n>0$, if $A\seq\Z^{n+1}$ is a Presburger set then
$$
A=\bigcup_{t=1}^kX_t[f_t,g_t]^{c_t}_{m_t},
$$
where, for all $1\leq t\leq k$, $X_t\seq\Z^n$ is a Presburger set, $f_t,g_t$ are $\Z$-linear functions such that $X_t\seq\dom(f_t)\cap\dom(g_t)$, and $m_t,c_t\in\N$ with $c_t<m_t$.
\end{corollary}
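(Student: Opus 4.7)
The plan is to derive this corollary directly from Fact~\ref{fact:CD} by unpacking the recursive definition of cells and absorbing the special case of $(\eta,0)$-cells into the more general $(\eta,1)$-cell format. First I would apply Cluckers' cell decomposition to write $A$ as a finite union of $\eta$-cells in $\Z^{n+1}$, where each $\eta\in 2^{<\omega}$ satisfies $|\eta|=n+1\geq 2$. Since $|\eta|\geq 2$, every such cell is built via clause (3) or (4) of Definition~\ref{def:cells}, so we can write $\eta=(\eta',i)$ with $|\eta'|=n$ and each cell has the shape $X[f]$ or $X[f,g]^c_m$ for some $\eta'$-cell $X\seq\Z^n$ and $\Z$-linear functions $f$ (and $g$) whose domains contain $X$.

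Next I would reduce the two cases to a single form. For an $(\eta',1)$-cell the set $X[f,g]^c_m$ is already of the claimed shape. For an $(\eta',0)$-cell $X[f]$, here $f$ is standard $\Z$-linear (in particular not extreme), and the identity
$$
X[f]=\{(\xbar,y)\in\Z^{n+1}:\xbar\in X,\ f(\xbar)\leq y\leq f(\xbar),\ y\equiv_1 0\}=X[f,f]^0_1
$$
rewrites it as $X[f,f]^{c}_{m}$ with $c=0$ and $m=1$. Thus every cell appearing in the decomposition of $A$ may be put in the form $X_t[f_t,g_t]^{c_t}_{m_t}$. Note that we are not required to preserve the ``no uniform finite bound'' clause from the original definition of $(\eta,1)$-cells; the corollary's statement simply drops that condition, so no obstruction arises here.

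Finally I would verify that each base $X_t$, which so far is an $\eta'$-cell in $\Z^n$, is a Presburger set. This is an immediate induction on $|\eta'|$: singletons and sets of the form $[a,b]^c_m$ are clearly definable in $(\Z,+,<,0)$; standard $\Z$-linear partial functions and extreme functions have Presburger-definable graphs on Presburger domains (the domain $\cbar+\mbar\Z^n$ is a congruence condition, and the defining formula for $f$ uses only $+$, $<$, constants, and congruences); and the graph and ``sandwich'' operations used to build $(\eta',0)$- and $(\eta',1)$-cells preserve Presburger-definability. Hence $X_t$ is Presburger, completing the verification.

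The only potential pitfall is handling the degenerate shape $m=1$ introduced when converting $(\eta',0)$-cells; but since the corollary only asks that $c_t<m_t$ and that $f_t,g_t$ be $\Z$-linear, the choice $(c,m)=(0,1)$ with $f=g$ is legitimate, so there is no real obstacle. The heart of the argument is simply Cluckers' theorem; everything else is notational bookkeeping.
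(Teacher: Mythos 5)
Your proposal is correct and is essentially the paper's own argument: the text preceding the corollary makes exactly the observation that an $(\eta,0)$-cell $X[f]$ can be rewritten as the ``flat'' cell $X[f,f]^0_1$, so that Fact \ref{fact:CD} immediately yields the stated decomposition. The extra verification that each base $X_t$ is a Presburger set is routine and implicit in the paper.
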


\begin{remark}
A crucial observation is that any standard $\Z$-linear function is \emph{definable in $(\Z,+,0)$}. We will use this fact constantly in the subsequent arguments, and without further mention.
\end{remark}

\section{Technical reductions toward the main proof}\label{sec:technical}

In this section, we develop the technical tools necessary to prove the induction step of Theorem \ref{thm:main}. Therefore, throughout the section, we fix an integer $n>0$ and consider Presburger sets in $\Z^{n+1}$.

The goal of this section is to isolate a subclass of Presburger sets in $\Z^{n+1}$, satisfying certain structural assumptions, such that, in order to prove the induction step of Theorem \ref{thm:main}, it suffices to only consider sets in this special subclass (see Corollary \ref{cor:red3} for the precise statement). Roughly speaking, we apply a series of three reductions, starting with a Presburger set $A\seq \Z^{n+1}$ of the form given by Corollary \ref{cor:CD}. Our first reduction will be to ``sort'' the congruence classes $c_t~(\mod m_t)$ so that we may assume all $m_t$'s are the same and all $c_t$'s are $0$. The second reduction is to separate infinite fibers from finite fibers, and show that it suffices to assume $A$ has finite fibers. The third reduction is to identify the endpoints of the intervals in the fibers of $A$, and show that it suffices to assume there is a single finite collection of standard $\Z$-linear functions, which, up to some permutation, precisely determine the endpoints of any fiber of $A$.

For the sake of brevity, we will use the following terminology.

\begin{definition}
For any $n>0$, a Presburger set $A\seq\Z^n$ is \textbf{peripheral} if either $A$ is definable in $(\Z,+,0)$ or $A$ defines the ordering.
\end{definition}

Also, given an integer $k>0$, we let $[k]$ denote the set $\{1,\ldots,k\}$.

\subsection{Sorting congruence classes}

To simplify notation, given $X[f,g]^c_m$ as in Definition \ref{def:cells}, if $c=0$ then we omit it and write $X[f,g]_m$.

\begin{definition}
A set $A\seq\Z^{n+1}$ is \textbf{uniformly congruent} if
$$
A=\bigcup_{t=1}^kX_t[f_t,g_t]_m,
$$
where $m>0$ and, for all $t\in[k]$, $X_t\seq\Z^n$ is a Presburger set and $f_t,g_t$ are $\Z$-linear functions such that $X_t\seq\dom(f_t)\cap\dom(g_t)$.
\end{definition}

\begin{proposition}
Suppose $A\seq\Z^{n+1}$ is a Presburger set. Then $A$ is interdefinable with a finite sequence of uniformly congruent Presburger sets in $\Z^{n+1}$.
\end{proposition}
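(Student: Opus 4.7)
The plan is to use Corollary \ref{cor:CD} to write $A = \bigcup_{t=1}^{k} X_t[f_t,g_t]^{c_t}_{m_t}$ and then separate out the congruence classes modulo $M := \lcm(m_1,\ldots,m_k)$. Concretely, for each $c \in \{0,1,\ldots,M-1\}$, I would define
$$
B_c = \{(\xbar,y) \in \Z^{n+1} : (\xbar, y+c) \in A \text{ and } y \equiv_M 0\},
$$
i.e.\ the fiber of $A$ in the congruence class $c \pmod M$, translated down to the class $0$. The claim is that each $B_c$ is uniformly congruent with modulus $M$ and that the sequence $(B_0,\ldots,B_{M-1})$ is interdefinable with $A$.

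To verify that $B_c$ is uniformly congruent, I would expand the definition. If $c \not\equiv c_t \pmod{m_t}$, the corresponding piece contributes nothing; if $c \equiv c_t \pmod{m_t}$, then the condition $y + c \in [f_t(\xbar), g_t(\xbar)]^{c_t}_{m_t}$ combined with $y \equiv_M 0$ collapses simply to $y \in [f_t(\xbar)-c, g_t(\xbar)-c]_M$. The only thing requiring a short check is that the shifted functions $f_t - c$ and $g_t - c$ are still $\Z$-linear with the same domain: for standard $\Z$-linear functions this is immediate from the functional form (shifting the constant term $u$), and for extreme functions this follows directly from the convention $f+c=f$ adopted in the definition. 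Hence
$$
B_c = \bigcup_{t \,:\, c \equiv c_t \,(\!\!\!\!\mod m_t)} X_t[f_t-c,\ g_t-c]_M,
$$
which is uniformly congruent by definition.

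For the interdefinability, one direction is straightforward: each $B_c$ is definable from $A$ in $(\Z,+,0)$ since both the translation $y \mapsto y+c$ and the congruence relation $\equiv_M$ are definable in $(\Z,+,0)$. For the other direction, every integer $y$ lies in a unique congruence class $c \in \{0,\ldots,M-1\}$ modulo $M$, and the equivalence
$$
(\xbar,y) \in A \iff y \equiv_M c \text{ and } (\xbar, y-c) \in B_c
$$
shows $A$ is definable from $(B_0,\ldots,B_{M-1})$ in $(\Z,+,0)$. Combining these gives the required interdefinability.

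The argument is essentially bookkeeping, so there is no real obstacle; the only point that needs care is to confirm that adding a constant preserves $\Z$-linearity (both for the standard and extreme cases) and preserves the domain condition $X_t \subseteq \dom(f_t) \cap \dom(g_t)$, which is immediate from the definitions in Section~\ref{sec:CD}.
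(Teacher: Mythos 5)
Your proposal is correct and follows essentially the same route as the paper: decompose $A$ by congruence class modulo the least common multiple $M$, check that each class collapses to a union of $X_t[f_t-c,g_t-c]_M$ over those $t$ with $c\equiv_{m_t}c_t$, and translate down to the class $0$. The only cosmetic difference is that you fold the paper's two steps (restriction to the class $d$, then translation by $d$) into the single definition of $B_c$.
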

\begin{proof}
By Corollary \ref{cor:CD}, we may write
$$
A=\bigcup_{t=1}^kX_t[f_t,g_t]^{c_t}_{m_t},
$$
where, for all $1\leq t\leq k$, $X_t\seq\Z^n$ is a Presburger set, $f_t,g_t$ are $\Z$-linear functions such that $X_t\seq\dom(f_t)\cap\dom(g_t)$, and $m_t,c_t\in\N$ with $c_t<m_t$. Let $m$ be the least common multiple of $m_1,\ldots,m_k$. Given $0\leq d<m$, we let 
$$
A_d=\{(\xbar,y)\in A:y\equiv_m d\}.
$$
Then $A=\bigcup_{0\leq d<m}A_d$ and each $A_d$ is definable from $A$.

\noit{Claim}: Given $0\leq d<m$ there is a set $I_d\seq [k]$ such that
$$
A_d=\bigcup_{t\in I_d}X_t[f_t,g_t]^d_m.
$$

\noit{Proof}: Let $I_d=\{t\in [k]:A_d\cap X_t[f_t,g_t]^{c_t}_{m_t}\neq\emptyset\}$ and, for $t\in I_d$, let $B_t=A_d\cap X_t[f_t,g_t]^{c_t}_{m_t}$. Then $A_d=\bigcup_{t\in I_d}B_t$. Fix $t\in I_d$. We immediately have $B_t\seq X_t[f_t,g_t]^d_m$. By assumption, there is $(\xbar,z)\in B_t$ and so $z\equiv_{m_t}c_t$ and $z\equiv_m d$. Since $m_t$ divides $m$, it follows that $d\equiv_{m_t}c_t$. Therefore, if $(\xbar,y)\in X_t[f_t,g_t]^d_m$ then $y\equiv_m d$, and so $y\equiv_{m_t}c_t$, which means $(\xbar,y)\in X_t[f_t,g_t]^{c_t}_{m_t}\cap A_d=B_t$. Altogether, $B_t=X_t[f_t,g_t]^d_m$. \claim

Now, for $0\leq d<m$, set $C_d=\{(\xbar,y-d):(\xbar,y)\in A_d\}$. Then $C_d$ is interdefinable with $A_d$, and so, altogether, $A$ is interdefinable with $(C_d)_{0\leq d<m}$. Moreover,
$$
C_d=\bigcup_{t\in I_d}X_t[f_t-d,g_t-d]_m.
$$
In particular, each $C_d$ is uniformly congruent.
\end{proof}

From this result, we obtain our first reduction. 

\begin{corollary}\label{cor:red1}
If every uniformly congruent Presburger set in $\Z^{n+1}$ is peripheral, then every Presburger set in $\Z^{n+1}$ is peripheral. 
\end{corollary}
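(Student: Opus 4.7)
The plan is to apply the preceding proposition directly and do a short case analysis. Let $A \seq \Z^{n+1}$ be an arbitrary Presburger set. By the preceding proposition, $A$ is interdefinable with some finite sequence $(C_0,\ldots,C_{m-1})$ of uniformly congruent Presburger sets in $\Z^{n+1}$. By hypothesis, each $C_d$ is peripheral, so each $C_d$ is either definable in $(\Z,+,0)$ or defines the ordering.

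First I would dispose of the case where some $C_{d_0}$ defines the ordering. Since $A$ and $(C_0,\ldots,C_{m-1})$ are interdefinable, in particular $C_{d_0}$ is definable from $A$, so $(\Z,+,0,C_{d_0})$ is a reduct of $(\Z,+,0,A)$. Thus $\N$, being definable in $(\Z,+,0,C_{d_0})$, is definable in $(\Z,+,0,A)$, which means $A$ defines the ordering. In the remaining case, every $C_d$ is definable in $(\Z,+,0)$; since $A$ is definable from $(C_0,\ldots,C_{m-1})$, and each $C_d$ is already definable without any extra predicates, $A$ itself is definable in $(\Z,+,0)$. Either way, $A$ is peripheral.

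There is no real obstacle here: the content is entirely packaged in the preceding proposition, which reduces an arbitrary Presburger set to its uniformly congruent components with definability in both directions. The only things to verify are the trivial monotonicity statements that defining the ordering is preserved under passing to a larger structure, and that definability in $(\Z,+,0)$ is preserved when one adds predicates that are themselves already $(\Z,+,0)$-definable. Both are immediate from the definitions of reduct, expansion, and interdefinability given in Section~\ref{sec:def}.
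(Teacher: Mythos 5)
Your argument is correct and is precisely the (implicit) argument the paper intends when it says the corollary follows "from this result": interdefinability with the sequence $(C_d)$ plus the case split on whether some component defines the ordering. Nothing is missing.
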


\subsection{Eliminating infinite fibers}

Similar to before, given $a,b\in\Zbar$ and $m>0$, we let $[a,b]_m=[a,b]^0_m$. 

\begin{definition}\label{def:wsf}
Let $A\seq\Z^{n+1}$ be a Presburger set.
\begin{enumerate}
\item Let $\pi(A)=\{\xbar\in\Z^n:(\xbar,y)\in\Z^{n+1}\text{ for some }y\in\Z\}$ be the \textbf{projection of $A$ to $\Z^n$}.
\item Given $\xbar\in\Z^n$, define the \textbf{fiber} $A_{\xbar}=\{y\in\Z:(\xbar,y)\in A\}$. In particular, $A_{\xbar}\neq\emptyset$ if and only if $\xbar\in\pi(A)$.
\item  We say $A$ has \textbf{weakly sorted fibers} if there are sets $F=\{f_1,\ldots,f_k\}$ and $G=\{g_1,\ldots,g_l\}$ of standard $\Z$-linear functions, and an integer $m>0$, such that for all $\xbar\in\pi(A)$, there is some $I(\xbar)\seq [k]\times[l]$ satisfying:
\begin{enumerate}[$(i)$]
\item $\xbar\in \dom(f_s)\cap\dom(g_t)$ and $f_s(\xbar)\leq g_t(\xbar)$ for all $(s,t)\in I(\xbar)$, and
\item $A_{\xbar}=\bigcup_{(s,t)\in I(\xbar)}[f_s(\xbar),g_t(\xbar)]_m$.
\end{enumerate}
We may also specify that $A$ has weakly sorted fibers \textbf{witnessed by $(F,G,m)$}.
\end{enumerate}
\end{definition}

Note that, since we assume the functions in $F$ and $G$ are standard, it follows that if $A\seq\Z^{n+1}$ has weakly sorted fibers then $A_{\xbar}$ is finite for all $\xbar\in\Z^n$.

\begin{proposition}
Assume every Presburger set in $\Z^n$ is peripheral. Suppose $A\seq\Z^{n+1}$ is a uniformly congruent Presburger set. Then $A$ is interdefinable with a finite sequence of Presburger sets in $\Z^{n+1}$ with weakly sorted fibers.
\end{proposition}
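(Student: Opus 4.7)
My plan is to extract from $A$ three auxiliary Presburger sets---an upper-endpoint set $A_+$, a lower-endpoint set $A_-$, and an indicator $\tilde C_*$ for full fibers---each with weakly sorted fibers, and to show that the triple $(A_+, A_-, \tilde C_*)$ is interdefinable with $A$.

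First I would define the auxiliary sets. Let $C_* := \{\xbar \in \Z^n : \forall y (y \equiv_m 0 \to (\xbar, y) \in A)\}$, the set of $\xbar$ whose fiber equals $m\Z$; this is definable from $A$ in $(\Z, +, 0)$ by a universal formula. Set $\tilde C_* := C_* \times \{0\}$, whose singleton fibers are trivially weakly sorted. Next, let
\[
A_+ := \{(\xbar, y) \in A : (\xbar, y + m) \notin A\}, \quad A_- := \{(\xbar, y) \in A : (\xbar, y - m) \notin A\},
\]
both of which are $(\Z,+,0,A)$-definable. Using the uniformly congruent decomposition $A = \bigcup_t X_t[f_t, g_t]_m$, every upper endpoint of a maximal interval in a fiber of $A$ must coincide with some $g_t(\xbar)$ for which $g_t$ is standard (the upper endpoint of a maximal fiber-interval has to be the top of one of the cells' contributing intervals). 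Hence the singleton intervals $[g_t(\xbar), g_t(\xbar)]_m$ witness weakly sorted fibers for $A_+$ with $F = G := \{g_t : g_t \text{ standard}\}$, and analogously for $A_-$ using $\{f_t : f_t \text{ standard}\}$.

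The critical step is interdefinability. One direction, definability of $A_+$, $A_-$, $\tilde C_*$ from $A$, is immediate. For the reverse, the key tool is Corollary \ref{cor:bdd}: if any fiber $A_{\xbar_0}$ is infinite and bounded on one side, then $A_{\xbar_0}$ defines the ordering, and since $A_{\xbar_0}$ is $(\Z, +, 0, A)$-definable with parameter $\xbar_0$, so does $A$. This yields a dichotomy. If $A$ does not define the ordering, then every fiber of $A$ is either finite, empty, equal to $m\Z$, or else a $(\Z,+,0)$-definable cofinite subset of $m\Z$; in each case the endpoint data $A_\pm$ together with $\tilde C_*$ determine $A_{\xbar}$ uniquely. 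If $A$ does define the ordering, then applying the inductive hypothesis to the Presburger projections $\pi(A_+), \pi(A_-) \seq \Z^n$ (and, more subtly, analyzing $A_\pm$ themselves as subsets of $\Z^{n+1}$) shows that the order-defining content of $A$ must already be carried by the auxiliary sets, so that $<$ becomes available in $(\Z, +, 0, A_+, A_-, \tilde C_*)$; with $<$ in hand, $A_{\xbar}$ is reconstructed by pairing consecutive endpoints in $(A_-)_{\xbar}$ and $(A_+)_{\xbar}$ to form finite intervals, treating unpaired endpoints as half-infinite tails, and adjoining $m\Z$ whenever $\xbar \in C_*$.

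The main obstacle I anticipate is in the order-defining case: I must verify that $(A_+, A_-, \tilde C_*)$ itself defines the ordering whenever $A$ does, equivalently that the order-defining content of $A$ is genuinely inherited by these boundary sets. This requires a careful tracing of how nontrivial order-dependence in the cell decomposition of $A$ manifests in the values of $A_\pm$ across $\Z^n$. If this cannot be guaranteed by $(A_+, A_-, \tilde C_*)$ alone, the proof would augment the sequence with a further Presburger set with weakly sorted fibers (for instance, a graph extracted from the cell decomposition that witnesses an order-defining stratification) to carry the ordering explicitly.
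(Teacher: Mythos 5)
There is a genuine gap, and it sits at the center of your argument: the reconstruction of $A$ from $(A_+,A_-,\tilde C_*)$. Set-theoretically the endpoint data does determine each fiber, but interdefinability requires a first-order definition of $A$ in $(\Z,+,0,A_+,A_-,\tilde C_*)$, i.e., \emph{without} the ordering. Recovering a fiber from its endpoint sets means matching each left endpoint in $(A_-)_{\xbar}$ with the corresponding right endpoint in $(A_+)_{\xbar}$, and your own description of this step (``pairing consecutive endpoints'') invokes $<$. In the case where $A$ does not define the ordering you assert that the fibers are ``determined uniquely'' by the data, but you give no order-free formula, and none is available at this stage: showing that such sets are definable in $(\Z,+,0)$ is precisely the content of the remaining sections of the paper (quasi-coset decomposition, the polyhedral lemmas, Lemma \ref{lem:fibers}). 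In the case where $A$ does define the ordering, you explicitly leave open whether $(A_+,A_-,\tilde C_*)$ inherits the ordering, which is the other half of the interdefinability you need. A secondary problem: the top of a maximal run in $A_{\xbar}$ is $L_m(g_t(\xbar))$ rather than $g_t(\xbar)$ (which need not lie in $m\Z$, so $[g_t(\xbar),g_t(\xbar)]_m$ may be empty), and some $g_t$ may be extreme; so even the claim that $A_{\pm}$ have weakly sorted fibers witnessed by $\{g_t\}$ and $\{f_t\}$ needs repair.

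The paper avoids all of this by demanding much less of the decomposition. The order-defining case is dispatched in one line: such an $A$ is interdefinable with $\N\times\{0\}^n$, which has weakly sorted fibers. Otherwise, Corollary \ref{cor:bdd} forces every infinite fiber to be unbounded in both directions and hence cofinite in $m\Z$; the set $Y_1$ of points with infinite fiber is defined from $A$ by the order-free condition $A_{\xbar}+A_{\xbar}=m\Z$; the finite-fiber part $A_2$ is kept intact, since it already has weakly sorted fibers straight from the cell decomposition (the relevant $f_t,g_t$ must be standard there); and only the infinite-fiber part $A_1$ is replaced by its complement $B=(Y_1\times m\Z)\setminus A_1$, which has finite fibers. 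The hypothesis that every Presburger subset of $\Z^n$ is peripheral --- which your argument never uses where it matters --- is exactly what makes $Y_1$ definable in $(\Z,+,0)$ and hence $A_1$ recoverable from $B$ without the ordering. If you want to salvage your approach, adopt the paper's treatment of the order-defining case and replace the endpoint-set device by this complementation of the infinite-fiber part.
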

\begin{proof}
First, if $A$ defines the ordering then it is interdefinable with $\N\times\{0\}^n$, which has weakly sorted fibers. So we may assume $A$ does not define the ordering.

By assumption $A=\bigcup_{t=1}^kX_t[f_t,g_t]_m$ for some $X_t,f_t,g_t$ and $m>0$. Note that all fibers of $A$ are definable from $A$, and are therefore Presburger definable subsets of $\Z$, which do not define the ordering. It follows from Corollary \ref{cor:bdd} that any infinite fiber of $A$ is unbounded above and below. 

Let $X=\pi(A)=\bigcup_{t=1}^kX_t$, and note that $X$ is definable from $A$. Given $\xbar\in X$, let $I(\xbar)=\{(t,t):\xbar\in X_t\}$. In particular, if $\xbar\in X$ then $\xbar\in\dom(f_t)\cap\dom(g_t)$ for all $(t,t)\in I(\xbar)$ and, moreover,
\begin{equation*}
A_{\xbar}=\bigcup_{(t,t)\in I(\xbar)}[f_t(\xbar),g_t(\xbar)]_m.\tag{$\dagger$}
\end{equation*}
Set $Y_1=\{\xbar\in X:A_{\xbar}\text{ is infinite}\}$ and $Y_2=X\backslash Y_1$.

\noit{Claim}: For any $\xbar\in\Z^n$, $\xbar\in Y_1$ if and only if $A_{\xbar}+A_{\xbar}=m\Z$.

\noit{Proof}: Clearly, if $A_{\xbar}+A_{\xbar}=m\Z$ then $A_{\xbar}$ is infinite and so $\xbar\in Y_1$. Conversely, suppose $\xbar\in Y_1$. As previously noted, it follows from Corollary \ref{cor:bdd} that $A_{\xbar}$ is unbounded above and below.  Combined with $(\dagger)$, there must be $s,t\in I(\xbar)$ such that $f_s=\nv\infty$ and $g_t=\infty$, which means there are $a,b\in\Z$ such that $A_{\xbar}\cup[a,b]_m=m\Z$. Fix $z\in m\Z$ and choose $y\in m\Z$ such that $y>\max\{b,z-a\}$. If $x=z-y$ then $x,y\in A_{\xbar}$ and $x+y=z$. Altogether, $A_{\xbar}+A_{\xbar}=m\Z$.\claim
 
It follows from the proof of the claim that if $\xbar\in Y_1$ then $A_{\xbar}$ is cofinite in $m\Z$. By the claim, $Y_1$ is definable from $A$, and therefore so is $Y_2$. For $i\in\{1,2\}$, let 
$$
A_i=\{(\xbar,y)\in A:\xbar\in Y_i\}.
$$
Then $A=A_1\cup A_2$ and so $A$ is interdefinable with $(A_1,A_2)$.

We first show that $A_2$ has weakly sorted fibers. Let $F$ be the set of standard $f_t$, and $G$ the set of standard $g_t$. By $(\dagger)$, if $\xbar\in Y_2=\pi(A_2)$ and $(t,t)\in I(\xbar)$, then $f_t$ and $g_t$ are standard. Therefore $A_2$ has weakly sorted fibers, witnessed by $(F,G,m)$. 

To finish the proof, we assume $A_1\neq\emptyset$ (i.e. $Y_1\neq\emptyset$) and show that $A_1$ is interdefinable with a set $B\seq\Z^{n+1}$ with weakly sorted fibers. In particular, let $B=(Y_1\times m\Z)\backslash A_1$ (and so $A_1=(Y_1\times m\Z)\backslash B$). Since $Y_1\seq\Z^n$ is definable from $A$, and $A$ does not define the ordering, it follows that $Y_1$ does not define the ordering and therefore is definable in $(\Z,+,0)$ by assumption. Therefore $A_1$ and $B$ are interdefinable. 

Let $F^*=\{f_t-1:f_t\in F\}$ and $G^*=\{g_t+1:g_t\in G\}$. If $\xbar\in \pi(B)\seq X_1$ then, using $(\dagger)$, we have
$$
B_{\xbar}=m\Z\backslash A_{\xbar}=\bigcap_{(t,t)\in I(\xbar)}\bigg([\nv\infty,f_t(\xbar)-1]_m\cup[g_t(\xbar)+1,\infty]_m\bigg).
$$
On the other hand, if $\xbar\in \pi(B)$ then $A_{\xbar}$ is cofinite in $m\Z$, and so $B_{\xbar}$ is finite. Therefore, $(G^*,F^*,m)$ witnesses that $B$ has weakly sorted fibers.
\end{proof}

Combined with Corollary \ref{cor:red1}, we obtain our next reduction.

\begin{corollary}\label{cor:red2}
Assume every Presburger set in $\Z^n$ is peripheral. If every Presburger set in $\Z^{n+1}$, with weakly sorted fibers, is peripheral, then every Presburger set in $\Z^{n+1}$ is peripheral.
\end{corollary}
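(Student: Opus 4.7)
The plan is to chain the preceding proposition together with Corollary~\ref{cor:red1}. Assume the two hypotheses: every Presburger set in $\Z^n$ is peripheral, and every Presburger set in $\Z^{n+1}$ with weakly sorted fibers is peripheral. By Corollary~\ref{cor:red1}, to conclude that every Presburger set in $\Z^{n+1}$ is peripheral, it suffices to fix an arbitrary uniformly congruent Presburger set $A \subseteq \Z^{n+1}$ and show that $A$ itself is peripheral.

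Applying the preceding proposition to $A$ (this is exactly where the hypothesis on dimension $n$ is consumed, through the appeal to peripherality of the auxiliary set $Y_1$ and the projection $\pi(A)$) produces a finite sequence $(B_1,\ldots,B_r)$ of Presburger sets in $\Z^{n+1}$, each with weakly sorted fibers, such that $A$ and $(B_1,\ldots,B_r)$ are interdefinable. By the second standing hypothesis, each $B_i$ is peripheral.

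It then remains to transfer peripherality from the sequence back to $A$, which I would handle by a short case split. If some $B_{i_0}$ defines the ordering, then $\N$ is definable in $(\Z,+,0,B_{i_0})$, hence in the richer structure $(\Z,+,0,B_1,\ldots,B_r)$; by interdefinability this has the same definable sets as $(\Z,+,0,A)$, so $A$ itself defines the ordering. Otherwise every $B_i$ is definable in $(\Z,+,0)$, so the structure $(\Z,+,0,B_1,\ldots,B_r)$ is interdefinable with $(\Z,+,0)$, and the interdefinability of this sequence with $A$ forces $A$ to be definable in $(\Z,+,0)$. Either way $A$ is peripheral.

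There is no genuine obstacle: the real work was already carried out in the preceding proposition, which invoked the inductive hypothesis via the fact that $\pi(A)$ and the set $Y_1$ of parameters with infinite fibers are Presburger-definable from $A$ and hence peripheral by hypothesis. Corollary~\ref{cor:red2} is essentially a packaging statement, combining the two previous reductions into the exact form that will be iterated in Corollary~\ref{cor:red3}.
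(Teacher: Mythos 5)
Your proposal is correct and matches the paper's (implicit) argument exactly: the paper states Corollary~\ref{cor:red2} as an immediate consequence of the preceding proposition combined with Corollary~\ref{cor:red1}, and your chaining of the two reductions, together with the case split transferring peripherality from the interdefinable sequence $(B_i)$ back to $A$, is precisely the intended reasoning. No issues.
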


\subsection{Identifying fibers up to permutation of boundary points}

Given $k>0$, let $S_k$ denote the group of permutations of $[k]$.

\begin{definition}
Fix $m>0$.
\begin{enumerate}
\item Define the binary relations $<_m$ and $\lhd_m$ on $\Z$ by
\begin{align*}
a<_m b &\miff a<x<b\text{ for some }x\in m\Z,\\
a\lhd_m b &\miff a\leq x\leq b \text{ for some }x\in m\Z.
\end{align*}
\item Given $x\in\Z$, let $\rho_m(x)$ be the unique element of $\{0,1,\ldots,m-1\}$ such that $x\equiv_m \rho_m(x)$. Define functions $L_m,R_m,L^-_m,R^-_m$ from $\Z$ to $m\Z$ such that, for $x\in \Z$,
\begin{alignat*}{2}
L_m(x) &:= \max\{y\in m\Z:y\leq x\} & &= x-\rho_m(x),\\
R_m(x) &:= \min\{y\in m\Z:x\geq y\} & &= x+\rho_m(\nv x),\\
L^-_m(x) &:= \max\{y\in m\Z:y<x\} & &= L_m(x)-m,\\
R^+_m(x) &:= \max\{y\in m\Z:y>x\}  & &= R_m(x)+m.
\end{alignat*}
Note that $R_m$, $L^-_m$, and $R^-_m$ are all definable in $(\Z,+,0)$.
\item Suppose $A\seq\Z^{n+1}$ is is a Presburger set. We say $A$ has \textbf{sorted fibers} if there are tuples $\fbar=(f_1,\ldots,f_k)$ and $\gbar=(g_1,\ldots,g_k)$ of standard $\Z$-linear functions on $\Z^n$, and an integer $m>0$, such that  $\pi(A)\seq \dom(f_t)\cap\dom(g_t)$ for all $t\in[k]$ and, for all $\xbar\in\pi(A)$, there are $\sigma,\tau\in S_k$ satisfying:
\begin{enumerate}[$(i)$]
\item $f_{\sigma(t)}(\xbar)\lhd_m g_{\tau(t)}(\xbar)$ for all $1\leq t\leq k$,
\item $g_{\tau(t)}(\xbar)<_m f_{\sigma(t+1)}(\xbar)$ for all $1\leq t<k$,
\item $A_{\xbar}=\bigcup_{t=1}^k[f_{\sigma(t)}(\xbar),g_{\tau(t)}(\xbar)]_m$.
\end{enumerate}
We may also specify that $A$ has sorted fibers \textbf{witnessed by $(\fbar,\gbar,m)$}.
\end{enumerate}
\end{definition}

\begin{proposition}\label{prop:wsf}
Suppose $A\seq\Z^{n+1}$ is a Presburger set with weakly sorted fibers, witnessed by $(F,G,m)$. Then $A$ is interdefinable with a finite sequence $(B_i)$ of Presburger sets in $\Z^{n+1}$ such that each $B_i$ has sorted fibers witnessed by some $(\fbar^i,\gbar^i,m)$, with $|\fbar^i|=|\gbar^i|\leq\min\{|F|,|G|\}$.
\end{proposition}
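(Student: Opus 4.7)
The plan is to partition $\pi(A)$ into finitely many pieces, each definable in $(\Z,+,0,A)$, so that on each piece the fiber $A_\xbar$ has a combinatorial structure rigid enough to realize the sorted fibers definition; the sets $B_i$ are then the restrictions of $A$ to these pieces. The key data recoverable from $A$ alone are the left- and right-endpoint sets
\[
E^-(\xbar)=\{y\in A_\xbar:y-m\notin A_\xbar\},\qquad E^+(\xbar)=\{y\in A_\xbar:y+m\notin A_\xbar\},
\]
which index the maximal $m$-arithmetic progressions of $A_\xbar$ and are definable in $(\Z,+,0,A)$. A short argument using the weakly sorted decomposition shows $E^-(\xbar)\seq\{f_s(\xbar):s\in[k],\,\xbar\in\dom f_s\}$: if $y\in E^-(\xbar)$ then $y\in[f_s(\xbar),g_t(\xbar)]_m$ for some $(s,t)\in I(\xbar)$, and $y>f_s(\xbar)$ would put $y-m$ in the same interval, contradicting $y-m\notin A_\xbar$. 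The symmetric containment holds for $E^+(\xbar)$ and the $g_t$'s.

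I refine $\pi(A)$ by three pieces of data, each expressible in $(\Z,+,0,A)$: (1) the \emph{domain pattern} $(K,L)\in 2^{[k]}\times 2^{[l]}$ with $K=\{s:\xbar\in\dom f_s\}$ and $L=\{t:\xbar\in\dom g_t\}$; (2) the \emph{equality pattern}, i.e.\ the equivalence relations on $K$ and $L$ recording which $f_s(\xbar)$ (resp.\ $g_t(\xbar)$) coincide, together with fixed systems of representatives $\tilde K\seq K$ and $\tilde L\seq L$; and (3) the \emph{activity pattern} $(S,T)\in 2^{\tilde K}\times 2^{\tilde L}$ with $S=\{s\in\tilde K:f_s(\xbar)\in E^-(\xbar)\}$ and $T=\{t\in\tilde L:g_t(\xbar)\in E^+(\xbar)\}$. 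Items (1) and (2) are $(\Z,+,0)$-definable, and (3) is $(\Z,+,0,A)$-definable via $E^\pm(\xbar)$. On each resulting piece $Y$, the containments above become equalities $\{f_s(\xbar):s\in S\}=E^-(\xbar)$ and $\{g_t(\xbar):t\in T\}=E^+(\xbar)$, with both sides consisting of pairwise distinct values; since $|E^-(\xbar)|=|E^+(\xbar)|$ equals the number of maximal $m$-APs of $A_\xbar$, we obtain a common cardinality $r=|S|=|T|\leq\min\{k,l\}$ that is constant on $Y$.

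For each piece $Y$, fix enumerations $S=\{s_1,\ldots,s_r\}$ and $T=\{t_1,\ldots,t_r\}$, set $\fbar=(f_{s_1},\ldots,f_{s_r})$, $\gbar=(g_{t_1},\ldots,g_{t_r})$, and $B_Y=A\cap(Y\times\Z)$. For $\xbar\in Y$, let $\sigma,\tau\in S_r$ be the unique permutations listing $(f_{s_i}(\xbar))_i$ and $(g_{t_i}(\xbar))_i$ in increasing order; since consecutive maximal $m$-APs of $A_\xbar$ are by construction strictly separated by a multiple of $m$, conditions (i)--(iii) of the sorted fibers definition follow at once. Thus $B_Y$ has sorted fibers witnessed by $(\fbar,\gbar,m)$ with $|\fbar|=|\gbar|=r\leq\min\{|F|,|G|\}$, and the finite sequence $(B_Y)$ is interdefinable with $A$, because $A$ is the disjoint union of the $B_Y$'s and each $Y$ is $(\Z,+,0,A)$-definable.

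The main obstacle is that the linear order of the maximal intervals of $A_\xbar$ along $\Z$ is not a priori $(\Z,+,0,A)$-definable, so one cannot further partition $Y$ to make the permutations $\sigma,\tau$ constant; fortunately the sorted fibers definition permits $\sigma,\tau$ to depend on $\xbar$, which is precisely what rescues the construction. The equality refinement in step (2) plays the complementary essential role of ensuring that the tuple $(f_{s_i}(\xbar))_i$ has pairwise distinct entries, so that condition (iii) genuinely expresses a disjoint-union decomposition rather than a union with repeated intervals.
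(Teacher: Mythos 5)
Your proof follows essentially the same route as the paper's: the endpoint sets $E^\pm(\xbar)$ are exactly the paper's $A^\mp_{\xbar}$ (since $A_{\xbar}\seq m\Z$, the maps $y\mapsto y\pm m$ agree with $R^+_m$ and $L^-_m$ there), and your partition of $\pi(A)$ by activity pattern plus distinctness is the paper's partition into the sets $Y^\beta_\alpha$, with your domain/equality refinement folded into the paper's conditions $(i)$, $(iv)$, $(v)$. One systematic imprecision should be repaired: since $f_s(\xbar)$ need not lie in $m\Z$, the containment is $E^-(\xbar)\seq\{R_m(f_s(\xbar)):s\in I_1(\xbar)\}$ rather than $\seq\{f_s(\xbar):\ldots\}$ (the correct reason being $L^-_m(y)<f_s(\xbar)\leq y$ forces $y=R_m(f_s(\xbar))$, not that ``$y>f_s(\xbar)$ puts $y-m$ in the interval''), and accordingly your equality and activity patterns must be taken on the values $R_m(f_s(\xbar))$ and $L_m(g_t(\xbar))$ rather than on $f_s(\xbar)$ and $g_t(\xbar)$ --- otherwise two functions with distinct values but equal $R_m$-images would both be declared active and $|S|$ could exceed the number of maximal progressions. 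Since $R_m$ and $L_m$ are definable in $(\Z,+,0)$, this fix is mechanical and the rest of your argument, including letting $\sigma,\tau$ vary with $\xbar$, goes through as in the paper.
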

\begin{proof}
Let $F=\{f_1,\ldots,f_k\}$ and $G=\{g_1,\ldots,g_l\}$. For $\xbar\in\pi(A)$, let $I(\xbar)\seq[k]\times[l]$ be as in Definition \ref{def:wsf}$(3)$, and let $I_1(\xbar)$ and $I_2(\xbar)$ be the projections to the first and the second coordinate, respectively. Define the sets
$$
A^-=\{(\xbar,y)\in A:(\xbar,L^-_m(y))\not\in A\}\mand A^+=\{(\xbar,y)\in A:(\xbar,R^+_m(y))\not\in A\}.
$$
Note that $A^-$ and $A^+$ are each definable from $A$.

\noit{Claim}: For any $\xbar\in\Z^n$, 
$$
A^-_{\xbar}\seq\{R_m(f_s(\xbar)):s\in I_1(\xbar)\}\mand A^+_{\xbar}\seq\{L_m(g_t(\xbar)):t\in I_2(\xbar)\}.
$$

\noit{Proof}: Fix $\xbar\in\Z^n$. Suppose $y\in A^-_{\xbar}$. Then $y\in[f_s(\xbar),g_t(\xbar)]_m$ for some $(s,t)\in I(\xbar)$. Since $(\xbar,L^-_m(y))\not\in A$, we must have $L^-_m(y)<f_s(\xbar)\leq y$. Since $y\in m\Z$, it follows that $y=R_m(f_s(\xbar))$. The proof of the second containment is similar.\claim

Let $\Sigma=\cP([k])\times\cP([l])$, where $\cP$ denotes powerset. Given $(\alpha,\beta)\in\Sigma$, define $Y^\beta_\alpha\seq\Z^k$ such that $\xbar\in Y^\beta_\alpha$ if and only if:
\begin{enumerate}[$(i)$]
\item $\xbar\in\pi(A)$, $\alpha\seq I_1(\xbar)$, and $\beta\seq I_2(\xbar)$,
\item $A^-_{\xbar}=\{R_m(f_t(\xbar)):t\in\alpha\}$,
\item $A^+_{\xbar}=\{L_m(g_t(\xbar)):t\in\beta\}$,
\item for all distinct $s,t\in\alpha$, $R_m(f_s(\xbar))\neq R_m(f_t(\xbar))$, and
\item for all distinct $s,t\in\beta$, $L_m(g_s(\xbar))\neq L_m(g_t(\xbar))$.
\end{enumerate}
Then $Y^\beta_\alpha$ is definable from $A$ and, moreover, it follows from the claim that
$$
\pi(A)=\bigcup_{(\alpha,\beta)\in\Sigma}Y^\beta_\alpha.
$$
Given $(\alpha,\beta)\in\Sigma$, set $B^{\beta}_\alpha=\{(\xbar,y)\in A:\xbar\in Y^\beta_\alpha\}$. Let $\Sigma_0=\{(\alpha,\beta)\in\Sigma:B^\beta_\alpha\neq\emptyset\}$. Then 
$$
A=\bigcup_{(\alpha,\beta)\in\Sigma_0}B^\beta_\alpha,
$$
and so $A$ is interdefinable with $(B^\beta_\alpha)_{(\alpha,\beta)\in\Sigma}$. Given $\alpha\seq[k]$ and $\beta\seq[l]$, define $\fbar^\alpha=(f_s)_{s\in\alpha}$ and $\gbar^\beta=(g_t)_{t\in\beta}$. To finish the proof, we fix $(\alpha,\beta)\in\Sigma_0$ and show $|\alpha|=|\beta|$ and $B^\beta_\alpha$ has sorted fibers, witnessed by $(\fbar^\alpha,\gbar^\beta,m)$. To ease notation, let $Y=Y^\beta_\alpha$ and $B=B^\beta_\alpha$. Without loss of generality, we also assume $\alpha=[p_1]$ and $\beta=[p_2]$ for some $p_1\leq k$ and $p_2\leq l$. 

Fix $\xbar\in Y$. We have
$$
B_{\xbar}=A_{\xbar}=\bigcup_{(s,t)\in I(\xbar)}[f_s(\xbar),g_t(\xbar)]_m.
$$
Therefore we can write $B_{\xbar}$ as a union
$$
B_{\xbar}=\bigcup_{t=1}^p[a_t,b_t]_m,
$$
where $\{a_1,\ldots,a_p\}\seq\{f_1(\xbar),\ldots,f_k(\xbar)\}$, $\{b_1,\ldots,b_p\}\seq\{g_1(\xbar),\ldots,g_l(\xbar)\}$, and
$$
a_1\lhd_m b_1<_m a_2\lhd_m b_2<_m\ldots <_m a_p\lhd_m b_p.
$$
Altogether, we want to show $p_1=p=p_2$ and there are $\sigma,\tau\in S_p$ such that, for all $1\leq t\leq p$, $a_t=f_{\sigma(t)}(\xbar)$ and $b_t=g_{\tau(t)}(\xbar)$. To see this, first observe that $R_m(a_1),\ldots,R_m(a_p)$ are distinct elements in $A^-_{\xbar}$ and $L_m(b_1),\ldots,L_m(b_p)$ are distinct elements in $A^+_{\xbar}$. Moreover, if $t\in\alpha$ then $R_m(f_t(\xbar))\in A^-_{\xbar}$, and so $f_t(\xbar)$ is a left endpoint of some interval in 
$$
\cI=\{[a_1,b_1]_m,\ldots,[a_p,b_p]_m\}.
$$
Similarly, if $t\in\beta$ then $L_m(g_t(\xbar))\in A^+_{\xbar}$, and so $g_t(\xbar)$ is a right endpoint of some interval in $\cI$. Therefore,
\begin{align*}
\{R_m(a_1),\ldots,R_m(a_p)\} &= \{R_m(f_t(\xbar)):1\leq t\leq p_1\}\mand\\
 \{L_m(b_1),\ldots,L_m(b_p)\} &= \{L_m(g_t(\xbar)):1\leq t\leq p_2\},
\end{align*}
as desired.
\end{proof}

Combined with Corollary \ref{cor:red2}, we have our third and final reduction.

\begin{corollary}\label{cor:red3}
Assume every Presburger set in $\Z^n$ is peripheral. If every Presburger set in $\Z^{n+1}$, with sorted fibers, is peripheral, then every Presburger set in $\Z^{n+1}$ is peripheral.
\end{corollary}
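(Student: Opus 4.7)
The plan is a direct chaining of the two preceding results. The hypothesis of Corollary \ref{cor:red3} matches the hypothesis of Corollary \ref{cor:red2} except that ``sorted fibers'' has replaced ``weakly sorted fibers''. Thus, by Corollary \ref{cor:red2}, it suffices to prove that every Presburger set $A \seq \Z^{n+1}$ with weakly sorted fibers is peripheral. So I would fix such an $A$, apply Proposition \ref{prop:wsf} to obtain a finite sequence $(B_1, \ldots, B_r)$ of Presburger sets in $\Z^{n+1}$ with \emph{sorted} fibers and interdefinable with $A$, and then invoke the standing hypothesis of Corollary \ref{cor:red3} to conclude that each $B_i$ is peripheral.

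The remaining step is to observe that peripherality descends through interdefinability, which is a brief case analysis. If some $B_i$ defines the ordering, then because $B_i$ is definable from $A$, the set $\N$ is definable in $(\Z,+,0,B_i)$ and hence in $(\Z,+,0,A)$, so $A$ also defines the ordering. Otherwise every $B_i$ is definable in $(\Z,+,0)$, and since $A$ is definable from the sequence $(B_1, \ldots, B_r)$, it follows that $A$ itself is definable in $(\Z,+,0)$. In either case $A$ is peripheral.

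There is no real obstacle in this corollary: all of the genuine work has been carried out in Proposition \ref{prop:wsf}, which actually constructs the sorted-fiber sets out of a weakly sorted one, and in the earlier reductions (Corollaries \ref{cor:red1} and \ref{cor:red2}). Corollary \ref{cor:red3} just records the cumulative upshot of those three reductions in a form ready to be used in the induction step of Theorem \ref{thm:main}.
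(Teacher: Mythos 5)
Your proof is correct and is exactly the argument the paper intends: it chains Corollary \ref{cor:red2} with Proposition \ref{prop:wsf} and transfers peripherality across interdefinability, which the paper compresses into the single remark ``Combined with Corollary \ref{cor:red2}, we have our third and final reduction.'' Nothing further is needed.
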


\subsection{Exchanging parallel functions}

In the proof of the main result, we will use Corollary \ref{cor:red3} to focus on Presburger sets in $\Z^{n+1}$ with sorted fibers. Given such a set $A$, we will assume $A$ does not define the ordering and prove that $A$ is definable in $(\Z,+,0)$. In this section, we prove one final technical result, which will allow us to argue by induction on the number of $\Z$-linear functions used to determine the fibers of $A$.

\begin{lemma}\label{lem:fibers}
Suppose $A\seq\Z^{n+1}$ has sorted fibers, witnessed by $(\fbar,\gbar,m)$, where $\fbar=(f_1,\ldots,f_k)$ and $\gbar=(g_1,\ldots,g_k)$. Assume $\pi(A)$ is definable in $(\Z,+,0)$, and suppose there are $s,t\in \{1,\ldots,k\}$ such that $f_s-g_t$ is constant on $\pi(A)$. 
\begin{enumerate}[$(a)$]
\item Assume $g_t=f_s+c$ on $\pi(A)$, for some $c\geq 0$, and set 
$$
B=\{(\xbar,y)\in A:f_s(\xbar)\leq y\leq g_t(\xbar)\}.
$$
Then $B$ is definable in $(\Z,+,0)$.
\item If $k>1$ then $A$ is interdefinable with a finite sequence of Presburger sets $(B_i)$ in $\Z^{n+1}$ such that each $B_i$ has sorted fibers, witnessed by some $(\fbar^i,\gbar^i,m)$ with $|\fbar^i|=|\gbar^i|<k$.
\end{enumerate}
\end{lemma}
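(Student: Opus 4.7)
For part (a), I observe that $B$ has fibers of size at most $N := \lfloor c/m\rfloor + 1$, since $B_{\xbar}$ is contained in the multiples of $m$ in the constant-length window $[f_s(\xbar), f_s(\xbar)+c]$. My plan is to partition $\pi(A)$ into finitely many $(\Z,+,0)$-definable pieces on which the combinatorial configuration of the sorted fiber decomposition within the window is constant, and then verify by direct inspection that $B$ restricted to each piece is $(\Z,+,0)$-definable.

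The partition is generated by (i) for each $i \in [k]$, the exact value of $f_i(\xbar) - f_s(\xbar)$ when it lies in $\{0, 1, \ldots, c\}$ (with a separate atom for ``outside this range''); (ii) similarly for $g_j(\xbar) - f_s(\xbar)$; and (iii) the residue $f_s(\xbar) \bmod m$. Each of these predicates is a finite Boolean combination of level sets of standard $\Z$-linear functions and congruence classes, so the partition is $(\Z,+,0)$-definable. The obstacle I anticipated was expressing ``$y \in A_{\xbar}$'' without invoking the order; the resolution is that inside the window, all differences between any two of the window-contributing functions $\{f_i\} \cup \{g_j\}$ are bounded in absolute value by $c$, so every ordering comparison between them reduces to membership in a fixed finite set of integers, which is $(\Z,+,0)$-definable. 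On each atom $P$, the window contributors $I_P, J_P \subseteq [k]$, the integer offsets $d_i = f_i - f_s$ for $i \in I_P$ and $e_j = g_j - f_s$ for $j \in J_P$, and the residue $f_s \bmod m$ are all determined; hence $B_{\xbar}$ for $\xbar \in P$ is a fixed union of intervals $[f_s(\xbar)+d, f_s(\xbar)+e]_m$, each a finite union of translates of the graph of $f_s$ by integer constants. Summing over atoms yields $B$ definable in $(\Z,+,0)$.

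For part (b), (a) gives $B$ definable in $(\Z,+,0)$, so $A' := A \setminus B$ is interdefinable with $A$. On each atom $P$ from the partition of part (a), with $s \in I_P$, $t \in J_P$, and $|I_P| = |J_P| =: p \geq 1$, the fiber of $A'$ at $\xbar \in P$ is precisely the union of the sorted intervals of $A_{\xbar}$ lying outside the window; restricting the original sorted-fiber permutation at $\xbar$ to the non-window indices shows that $A'|_{P \times \Z}$ has sorted fibers witnessed by $((f_i)_{i \notin I_P}, (g_j)_{j \notin J_P}, m)$, both sub-tuples having length $k - p \leq k - 1$. Indexing the required sequence $(B_i)$ by the atoms $P$ for which the restriction is nonempty completes the reduction.
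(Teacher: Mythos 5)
Part $(a)$ of your argument is correct and is essentially the paper's proof in different packaging: where the paper writes a single $(\Z,+,0)$-formula as a finite disjunction over the possible combinatorial configurations of the window $[f_s(\xbar),f_s(\xbar)+c]$, you partition $\pi(A)$ into finitely many $(\Z,+,0)$-definable atoms on which that configuration is constant. These are the same idea, and your observation that $B_{\xbar}$ is determined by the offsets $f_i(\xbar)-f_s(\xbar)$ and $g_j(\xbar)-f_s(\xbar)$ lying in $\{0,\ldots,c\}$ together with the residue of $f_s(\xbar)$ modulo $m$ is sound (the sorted-fiber condition forces the left endpoints $f_{\sigma(1)}(\xbar)<\cdots<f_{\sigma(k)}(\xbar)$ to be distinct, so the window's intersection with $A_{\xbar}$ is exactly the union of the complete sorted intervals it contains). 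Your atom-wise treatment of the first half of $(b)$ is also fine, and in fact slightly streamlines the paper's route: since the window-contributing index sets $I_P,J_P$ are constant on each atom, you obtain genuinely sorted fibers for $A\setminus B$ on each piece directly, whereas the paper only extracts weakly sorted fibers and then re-invokes Proposition \ref{prop:wsf}.

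However, part $(b)$ has a genuine gap: you only treat the case $c\geq 0$. The hypothesis of $(b)$ is merely that $f_s-g_t$ is constant on $\pi(A)$; the constant may well be such that $g_t=f_s+c$ with $c<0$, and this case really occurs in the main proof, since Claim \ref{claim:main} produces a pair with constant difference of uncontrolled sign (the pair $(s,t)$ it finds need not be a matched left/right endpoint pair of the sorted decomposition). When $c<0$, the set $B$ of part $(a)$ is empty, $A\setminus B=A$, your sets $I_P$ and $J_P$ are empty, and no reduction in $k$ takes place. The paper handles this case by a dual construction: it sets $B=\{(\xbar,y)\notin A:g_t(\xbar)\leq y\leq f_s(\xbar),\ y\in m\Z\}$, shows by an argument parallel to $(a)$ that this $B$ is definable in $(\Z,+,0)$, and observes that $A\cup B$ (interdefinable with $A$) has the gaps between the interval ending at $g_t(\xbar)$ and the interval beginning at $f_s(\xbar)$ filled in, so that its fibers decompose using neither $f_s$ nor $g_t$ as endpoints, again reducing the number of witnessing functions below $k$. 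You need to add this second case for the lemma, and hence the main induction, to go through.
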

\begin{proof}
Without loss of generality, we may assume $f_k-g_k$ is constant on $\pi(A)$. 

Part $(a)$. Given $1\leq p\leq k$, define the set
$$
I_p=\{(i_1,j_1,\ldots,i_p,j_p)\in\Z^{2p}:0=i_1\leq j_1\leq i_2\leq j_2\leq\ldots\leq i_p\leq j_p=c\}.
$$
We claim that $(\xbar,y)\in B$ if and only if
\begin{enumerate}
\item $\xbar\in\pi(A)$ and $y\in m\Z$, and
\item there are
\begin{enumerate}[$(i)$]
\item an integer $1\leq p\leq k$,
\item injective functions $\sigma,\tau\colon [p]\to [k]$, with $\sigma(1)=k$ and $\tau(p)=k$, and
\item a tuple $(i_1,j_1,\ldots,i_p,j_p)\in I_p$,
\end{enumerate}
such that
\begin{enumerate}[$\bullet$]
\item for all $1\leq t\leq p$, $f_{\sigma(t)}(\xbar)=f_k(\xbar)+i_t$ and $g_{\tau(t)}(\xbar)=f_k(\xbar)+j_t$,
\item for all $s\in [k]\backslash\Im\sigma$ and $t\in [k]\backslash\Im\tau$, neither $f_s(\xbar)$ nor $g_t(\xbar)$ is in $[f_k(\xbar),g_k(\xbar)]$, i.e.,
$$
\bigwedge_{i=0}^c\big[(f_s(\xbar)\neq f_k(\xbar)+i)\wedge (g_t(\xbar)\neq f_k(\xbar)+i)\big],\mand
$$
\item $y\in\bigcup_{t=1}^p[f_{\sigma(t)}(\xbar),g_{\tau(t)}(\xbar)]$, i.e.,
$$
\bigvee_{t=1}^p\bigvee_{i=i_t}^{j_t}y=f_k(\xbar)+i.
$$
\end{enumerate}
\end{enumerate}
In particular, since $\pi(A)$ is definable in $(\Z,+,0)$ by assumption, this shows that $B$ is definable in $(\Z,+,0)$.

To verify that the above data defines $B$, fix $(\xbar,y)\in B$. Then $f_k(\xbar)\leq g_k(\xbar)$ and so, since $A$ has sorted fibers, we may fix some $p\leq k$ for which there are distinct $u_1,\ldots,u_p\in [k]$ and distinct $v_1,\ldots,v_p\in [k]$ such that:
\begin{enumerate}[$(\ast)_1$]
\item $f_k(\xbar)=f_{u_1}(\xbar)\lhd_m g_{v_1}(\xbar)<_m\ldots<_m f_{u_p}(\xbar)\lhd_m g_{u_p}(\xbar)=g_k(\xbar)$,
\item for all $s\not\in\{u_1,\ldots,u_p\}$ and $t\not\in v_1,\ldots,v_p\}$, neither $f_s(\xbar)$ nor $g_t(\xbar)$ is in $[f_k(\xbar),g_k(\xbar)]$, and
\item $A_{\xbar}\cap[f_k(\xbar),g_k(\xbar)]=\bigcup_{t=1}^p[f_{u_t}(\xbar),g_{v_t}(\xbar)]_m$.
\end{enumerate}
Therefore, letting $\sigma:t\mapsto u_t$, $\tau:t\mapsto v_t$, and setting $i_t=f_{u_t}(\xbar)-f_k(\xbar)$ and $j_t=g_{v_t}(\xbar)-f_k(\xbar)$, we have properties $(1)$ and $(2)$ above.

Conversely, suppose $(\xbar,y)\in\Z^{n+1}$ satisfies properties $(1)$ and $(2)$ above, witnessed by $1\leq p\leq k$, $\sigma$, $\tau$, and $(i_1,j_1,\ldots,i_p,j_p)\in I_p$. Then we have
\begin{enumerate}[$\bullet$]
\item $f_k(\xbar)=f_{\sigma(1)}(\xbar)\leq g_{\tau(1)}(\xbar)\leq\ldots\leq f_{\sigma(p)}(\xbar)\leq g_{\tau(p)}(\xbar)=g_k(\xbar)$,
\item for all $s\in [k]\backslash\Im\sigma$ and $t\in [k]\backslash\Im\tau$, neither $f_s(\xbar)$ nor $g_t(\xbar)$ is in $[f_k(\xbar),g_k(\xbar)]$, and
\item $y\in\bigcup_{t=1}^p[f_{\sigma(t)}(\xbar),g_{\tau(t)}(\xbar)]_m$.
\end{enumerate}
Therefore, it suffices to show that $A_{\xbar}\cap[f_k(\xbar),g_k(\xbar)]=\bigcup_{t=1}^p[f_{\sigma(t)}(\xbar),g_{\tau(t)}(\xbar)]_m$. Since $\xbar\in\pi(A)$, there are $1\leq p_*\leq k$, distinct $u_1,\ldots,u_{p_*}\in [k]$, and distinct $v_1,\ldots,v_{p_*}\in [k]$ satisfying $(\ast)_1$ through $(\ast)_3$ above. We want to show that $p=p_*$ and, for all $1\leq t\leq p$, $u_t=\sigma(t)$ and $v_t=\tau(t)$. First, 
$$
p=|\{1\leq t\leq k:f_t(\xbar)\in[f_k(\xbar),g_k(\xbar)]\}|=p_*.
$$
Next, we have
\begin{enumerate}[$\bullet$]
\item $f_k(\xbar)=f_{u_1}(\xbar)\lhd_m g_{v_1}(\xbar)<_m\ldots<_m f_{u_p}(\xbar)\lhd_m g_{u_p}(\xbar)=g_k(\xbar)$, and
\item $f_k(\xbar)=f_{\sigma(1)}(\xbar)\leq g_{\tau(1)}(\xbar)\leq\ldots\leq f_{\sigma(p)}(\xbar)\leq g_{\tau(p)}(\xbar)=g_k(\xbar)$.
\end{enumerate}
Therefore it must be the case that, for all $1\leq t\leq p$, we have $u_t=\sigma(t)$ and $v_t=\tau(t)$.

Part $(b)$. We continue to assume $s=t=k$. Suppose first that $g_k=f_k+c$ for some $c\geq 0$, and let $B$ be as in part $(a)$. Then $A$ is interdefinable with $A^*:=A\backslash B$. If $\xbar\in \pi(A_*)$ then, since $A$ has sorted fibers, there are $\sigma,\tau\in S_k$ such that
\begin{enumerate}[$\bullet$]
\item $f_{\sigma(1)}(\xbar)\lhd_m f_{\tau(1)}(\xbar)<_m\ldots<_m f_{\sigma(k)}(\xbar)\lhd_m g_{\tau(k)}(\xbar)$, and
\item $A_{\xbar}=\bigcup_{t=1}^k[f_{\sigma(t)}(\xbar),g_{\tau(t)}(\xbar)]_m$.
\end{enumerate}
Therefore, for some $1\leq p\leq k-1$, there are distinct $s_1,\ldots,s_p\in [k-1]$ and distinct $t_1,\ldots,t_p\in [k-1]$ such that
\begin{enumerate}[$\bullet$]
\item $f_{s_1}(\xbar)\lhd_m f_{t_1}(\xbar)<_m\ldots<_m f_{s_p}(\xbar)\lhd_m g_{t_p}(\xbar)$, and
\item $A^*_{\xbar}=\bigcup_{j=1}^p[f_{s_j}(\xbar),g_{t_j}(\xbar)]_m$.
\end{enumerate}
In particular, if $F=\{f_1,\ldots,f_{k-1}\}$ and $G=\{g_1,\ldots,g_{k-1}\}$, then $A^*$ has weakly sorted fibers witnessed by $(F,G,m)$. By Proposition \ref{prop:wsf}, $A_*$ is interdefinable with a finite sequence $(B_i)$ of Presburger sets, where each $B_i$ has sorted fibers witnessed by some $(\fbar^i,\gbar^i,m)$, with $|\fbar^i|=|\gbar^i|\leq k-1$.

Now, we must prove the other case: $g_k=f_k+c$ for some $c<0$. The argument is similar, and equally technical, so we sketch the proof and leave the details to the reader. First, set
$$
B=\{(\xbar,y)\not\in A:g_k(\xbar)\leq y\leq f_k(\xbar),~y\in m\Z\}.
$$
Then, using a similar argument as in part $(a)$, one shows that $B$ is definable in $(\Z,+,0)$, and so $A$ is interdefinable with $A^*=A\cup B$. Moreover, similar to the first case, $A^*$ has weakly sorted fibers, witnessed by $(F,G,m)$ where $F=\{f_1,\ldots,f_{k-1}\}$ and $G=\{g_1,\ldots,g_{k-1}\}$. The result then follows from Proposition \ref{prop:wsf} as in the first case.
\end{proof}

\section{Proof of the main result}\label{sec:proof}

In this section, we prove Theorem \ref{thm:main} by induction on $n$. The base case $n=1$ was done in Corollary \ref{cor:1dim}. So fix $n>0$ and, for a primary induction hypothesis, assume that if $B\seq\Z^n$ is a Presburger set then either $B$ is definable in $(\Z,+,0)$ or $B$ defines the ordering. 

\begin{Claim}\label{claim:main}
Fix a Presburger set $A\seq\Z^{n+1}$, which has sorted fibers, witnessed by $(\fbar,\gbar,m)$, where $\fbar=(f_1,\ldots,f_k)$ and $\gbar=(g_1,\ldots,g_k)$ are tuples of standard $\Z$-linear functions. Assume $X:=\pi(A)$ is a quasi-coset in $\Z^n$. Then there are $s,t\in\{1,\ldots,k\}$ such that $f_s-g_t$ is constant on $X$. 
\end{Claim}
\begin{proof}
Suppose not. Let $X=C\backslash Z$ where $C$ is a coset of rank $n_*\leq n$ and $Z\seq C$ is definable in $(\Z,+,0)$, with $\rk(Z)<n_*$. Let $C=\cbar+G$ where $G\leq\Z^n$ is a subgroup with basis $\alpha=\{\abar^1,\ldots,\abar^{n_*}\}$. Set $\Phi=\Phi_{\alpha,\cbar}|_X$, and let $W=\Phi(Z)$.  Then $\Phi\colon X\to\Z^{n_*}$ is an injective function with $\Im(\Phi)=\Z^{n_*}\backslash W$. By Corollary \ref{cor:rectcoset}, $\rk(W)<n_*$. Let $Y=\Z^{n_*}\backslash W$, and note that $\Phi\colon X\to Y$ is a bijection.

For $1\leq t\leq k$, define
$$
\tilde{f}_t=f_t\circ\Phi\inv\mand \tilde{g}_t=g_t\circ\Phi\inv.
$$
Note that $\tilde{f}_t$ and $\tilde{g}_t$ are functions from $Y$ to $\Z$. Since $A$ has sorted fibers it follows that for all $\xbar\in Y$, there are $\sigma,\tau\in S_k$ such that
$$
\tilde{f}_{\sigma(1)}(\xbar)\leq \tilde{g}_{\tau(1)}(\xbar)<\ldots< \tilde{f}_{\sigma(k)}(\xbar)\leq \tilde{g}_{\tau(k)}(\xbar).
$$

For any $1\leq t\leq k$, $\tilde{f}_t$ and $\tilde{g}_t$ are each a composition of a standard $\Z$-linear function and $\Phi\inv$, which is of the form $\zbar\mapsto \cbar+\sum_{i=1}^{n_*}z_i\abar^i$. Therefore $\tilde{f}_t$ and $\tilde{g}_t$ determine affine transformations from $\R^{n_*}$ to $\R$. For any $s,t\in\{1,\ldots,k\}$, $f_s-g_t$ is non-constant on $X$, and so $\tilde{f}_s=\tilde{g}_t$ is non-constant on $Y$. Given $\sigma,\tau\in S_k$ and $\bullet\in\{+,-\}$, define the polyhedron
$$
P^\bullet(\sigma,\tau)=\bigcap_{t=1}^k\skov{H}^\bullet(\tilde{g}_{\tau(t)}-\tilde{f}_{\sigma(t)})\cap\bigcap_{t=1}^{k-1}H^\bullet(\tilde{f}_{\sigma(t+1)}-\tilde{g}_{\tau(t)}).
$$
Then, altogether, we have $Y\seq\bigcup_{\sigma,\tau\in S_k}P^+(\sigma,\tau)$. By Lemma \ref{lem:polyint}, we may fix $\mu,\nu\in S_k$ such that $r(P^+(\mu,\nu))=\infty$. Then, by Lemma \ref{lem:polyinv}, $r(P^-(\mu,\nu))=\infty$. 

Fix $\xbar\in P^-(\mu,\nu)\cap Y$. Then
$$
\tilde{g}_{\nu(k)}(\xbar)\leq \tilde{f}_{\mu(k)}(\xbar)<\ldots<\tilde{g}_{\nu(1)}(\xbar)\leq \tilde{f}_{\mu(1)}(\xbar).
$$
On the other hand, there are $\sigma,\tau\in S_k$ such that
$$
\tilde{f}_{\sigma(1)}(\xbar)\leq \tilde{g}_{\tau(1)}(\xbar)<\ldots< \tilde{f}_{\sigma(k)}(\xbar)\leq \tilde{g}_{\tau(k)}(\xbar).
$$
Then $\tilde{g}_{\nu(k)}(\xbar)<\tilde{g}_t(\xbar)$ for all $t\neq \nu(k)$, and $\tilde{g}_{\tau(1)}(\xbar)<\tilde{g}_t(\xbar)$ for all $t\neq\tau(1)$, which means $\tau(1)=\nu(k)$. But then $\tilde{g}_{\nu(k)}(\xbar)\leq \tilde{f}_{\sigma(1)}(\xbar)\leq g_{\tau(1)}(\xbar)$, and so $\tilde{g}_{\nu(k)}(\xbar)=\tilde{f}_{\sigma(1)}(\xbar)$. Then $\tilde{f}_{\sigma(1)}(\xbar)<\tilde{f}_t(\xbar)$ for all $t\neq \mu(k)$, and so $\sigma(1)=\mu(k)$. Altogether, we have shown that $\tilde{f}_{\mu(k)}(\xbar)=\tilde{g}_{\nu(k)}(\xbar)$ for all $\xbar\in P^-(\mu,\nu)\cap Y$. Therefore $\tilde{f}_{\mu(k)}=\tilde{g}_{\nu(k)}$ on $\R^{n_*}$ since otherwise $P^-(\mu,\nu)\cap Y$ would be contained in the hyperplane $H(\tilde{f}_{\mu(k)}-\tilde{g}_{\nu(k)})$, contradicting Lemma \ref{lem:polyint}. 

Now, if $\xbar\in X$ then
$$
f_{\mu(k)}(\xbar)=\tilde{f}_{\mu(k)}(\Phi(\xbar))=\tilde{g}_{\nu(k)}(\Phi(\xbar))=g_{\nu(k)}(\xbar),
$$
and so $f_{\mu(k)}=g_{\nu(k)}$ on $X$, which contradicts our original assumption.
\end{proof}

Finally, we proceed with the induction step. Fix a Presburger subset $A\seq\Z^{n+1}$, and assume $A$ does not define the ordering. We want to show $A$ is definable in $(\Z,+,0)$. By Corollary \ref{cor:red3}, we may assume $A$ has sorted fibers witnessed by $(\fbar,\gbar,m)$, where $\fbar=(f_1,\ldots,f_k)$ and $\gbar=(g_1,\ldots,g_k)$ are tuples of standard $\Z$-linear functions. We prove, by induction on $k$, that $A$ is definable in $(\Z,+,0)$.

Suppose $k=1$ and let $X=\pi(A)$. Then $X$ is definable from $A$ and therefore is a Presburger subset of $\Z^n$, which does not define the ordering. By the primary induction hypothesis, $X$ is definable in $(\Z,+,0)$. Therefore, by Theorem \ref{thm:qc}, we may write
$$
X=X_1\cup\ldots\cup X_p,
$$
where each $X_i$ is a quasi-coset in $\Z^n$. For $1\leq i\leq p$, let $A_i=\{(\xbar,y)\in A:\xbar\in X_i\}$. Then $A=\bigcup_{i=1}^pA_i$ and so $A$ is interdefinable with $(A_i)_{1\leq i\leq p}$. Note also that each $A_i$ still has sorted fibers witnessed by $(\fbar,\gbar,m)$. Altogether, without loss of generality, we may assume $X=\pi(A)$ is a single quasi-coset in $\Z^n$.

Since $A$ has sorted fibers and $k=1$, it follows that, for all $\xbar\in X$, $f_1(\xbar)\leq g_1(\xbar)$ and $A_{\xbar}=[f_1(\xbar),g_1(\xbar)]_m$. Therefore, by Claim \ref{claim:main}, $g_1=f_1+c$ on $X$ for some $c\geq 0$. By Lemma \ref{lem:fibers}$(a)$, it follows that $A$ is definable in $(\Z,+,0)$. 

Now, for a secondary induction hypothesis, fix $k>1$ and assume that if $B\seq\Z^{n+1}$ is a Presburger set, which does not define the ordering and has sorted fibers witnessed by some $(\fbar',\gbar',m)$ such that $|\fbar'|=|\gbar'|<k$, then $B$ is definable in $(\Z,+,0)$. Let $A\seq\Z^{n+1}$ be a Presburger set, which does not define the ordering and has sorted fibers witnessed by $(\fbar,\gbar,m)$, where $\fbar=(f_1,\ldots,f_k)$ and $\gbar=(g_1,\ldots,g_k)$. 

As in the secondary base case, it suffices to assume $X=\pi(A)$ is a single quasi-coset in $\Z^n$. By Claim \ref{claim:main}, there are $s,t\in \{1,\ldots,k\}$ such that $f_s-g_t$ is constant on $X$. By Lemma \ref{lem:fibers}$(b)$ and the secondary induction hypothesis, it follows that $A$ is definable in $(\Z,+,0)$. This completes the proof of Theorem \ref{thm:main}.

\end{document}